\documentclass[12pt,a4paper]{article}
\usepackage{hyperref}
\usepackage{tikz}
\usepackage{amsmath}
\usepackage{amssymb}
\usepackage{amsthm}
\usepackage{framed}
\usepackage{multicol}
\usepackage[OT2,T1]{fontenc}
\usepackage{graphicx}
\usepackage[all]{xy}
\usepackage{color}
\usepackage{mathrsfs}
\usepackage{mathtools}
\usepackage{authblk}

\setlength\topmargin{-0.3in}
\addtolength{\oddsidemargin}{-.625in}
	\addtolength{\evensidemargin}{-.625in}
	\addtolength{\textwidth}{1.25in}
\setlength\footskip{1in}

\numberwithin{equation}{section}

\newcommand{\sm}{\wedge}

\newcommand{\im}{\mathrm{im}}

\newcommand{\Sp}{\mathrm{Sp}}

\newcommand{\F}{\mathbb{F}}
\newcommand{\Z}{\mathbb{Z}}

\newcommand{\GL}{\mathrm{GL}}

\newcommand{\Po}{\mathcal{P}}
\newcommand{\Br}{\mathbf{B}}
\newcommand{\St}{\mathrm{St}}

\newcommand{\Top}{\mathrm{Top}}

\usepackage[utf8]{inputenc}

\title{Equivariant Steinberg Summands}
\author{Krishanu Sankar}
\date{}

\begin{document}

\maketitle

\begin{abstract}
We construct Steinberg summands of $G$-equivariant spectra with $\mathrm{GL}_n(\mathbb{F}_p)$-action. We prove a lemma about their fixed points when $G$ is a $p$-group, and then use this lemma to compute the fixed points of the Steinberg summand of the equivariant classifying space of $(\mathbb{Z}/p)^n$. These results will be used in a companion paper to study the layers in the mod $p$ symmetric power filtration for $H\underline{\mathbb{F}}_p$.
\end{abstract}
\newtheorem{theorem}{Theorem}
\newtheorem{lemma}[theorem]{Lemma}
\newtheorem{definition}[theorem]{Definition}
\newtheorem{proposition}[theorem]{Proposition}
\newtheorem{corollary}[theorem]{Corollary}
\newtheorem{conjecture}[theorem]{Conjecture}

\theoremstyle{definition}
\newtheorem{mydef}[theorem]{Definition}
\newtheorem{question}[theorem]{Question}
\newtheorem{example}[theorem]{Example}

\theoremstyle{theorem}
\newtheorem{exe}{Proposition}

\section{Introduction}
This brief paper establishes two results regarding Steinberg summands of equivariant spectra. Namely, let $G$ be a finite $p$-group and let $\GL_n=\GL_n(\F_p)$. Then,
\begin{enumerate}
\item (Theorem \ref{thm:commutingfunctors}) For any pointed $(G\times \GL_n)$-space $Y$, there is a natural homotopy equivalence $e_n(Y^G) \rightarrow (e_nY)^G$ from the Steinberg summand of the fixed points to the fixed points of the Steinberg summand.
\item (Theorem \ref{thm:HsummandofSteinberg}) Let $B_G(\Z/p)^n$ denote the equivariant classifying space of $(\Z/p)^n$. Let $\mathcal{C}$ denote the set of normal subgroups $H\subseteq G$ such that $G/H$ is an elementary abelian $p$-group. Then the fixed points of the Steinberg summand $e_n(B_G(\Z/p)_+)$ decomposes into a wedge sum of spectra
$$(e_n(B_G(\Z/p)_+^n)^G\simeq \bigvee\limits_{H\in \mathcal{C}}E_n(H).$$
If $G/H$ is elementary abelian of rank $d$, then the summand $E_n(H)$ is
$$E_n(H)\simeq e_{n-d}B(\Z/p)_+^{n-d}\sm \Sigma^{1-d}\Br_d^{\Diamond}\sm B(\Z/p)_+^d.$$
\end{enumerate}

Along the way we gather certain results about (non-equivariant) Steinberg summands that are scattered in the literature. We also prove a result relating Steinberg summands and Stiefel varieties $V_d(\F_p^n)$ (Proposition \ref{lemma:stiefelvariety}), which is an important step in the proof of Proposition \ref{prop:HsummandofSteinberg}.

This paper is a companion to a larger work (\cite{SymmetricPowers}) in which the layers in the mod $p$ symmetric power filtration are calculated, with a view to understanding $H\underline{\F}_p\sm H\underline{\F}_p$. In the larger paper, we observe that the genuine $G$-spectrum $H\underline{\F}_p$ is the infinite mod $p$ symmetric power of the equivariant sphere spectrum $\Sigma^{\infty G}S^0$, and the layers of the filtration
$$\Sigma^{\infty G}S^0=\Sp_{\Z/p}^1(\Sigma^{\infty G}S^0)\subset \Sp_{\Z/p}^p(\Sigma^{\infty G}S^0)\subset \Sp_{\Z/p}^{p^2}(\Sigma^{\infty G}S^0)\subset \cdots \subset \Sp_{\Z/p}^{\infty}(\Sigma^{\infty G}S^0)=H\underline{\F}_p$$
are the $n$-fold suspensions of the Steinberg summands $e_nB_G(\Z/p)_+^n$.

\section{Steinberg Summands}
In this section, we construct Steinberg summands and prove some of their basic properties. The results of this section are not original work of the author, but they are scattered throughout the literature so we collect and prove the results important to us.

In subsection \ref{sec:defineSteinberg}, we define the Steinberg idempotent and Steinberg representation. In subsection \ref{sec:products} we define product maps relating these idempotents. The Steinberg summand in topology does not appear until subsection \ref{definition:steinbergsummand}. There, we give its definition (Definition \ref{def:Steinbergsummand}) in terms of the flag complex. Several properties of the flag complex are proven, which will later be useful to us.
\subsection{The Steinberg idempotent}
\label{sec:defineSteinberg}

The content of this subsection is well known --- the reader may refer to \cite{CR}, \cite{Ste}, or \cite{MP}. Fix a prime $p$, and let $\F_p$ denote the field of $p$ elements. Let $n$ be a positive integer, and write $\GL_n=\GL_n(\F_p)$ for brevity. Denote by $R_n$ the group algebra $\Z_{(p)}[\GL_n]$. Tensor products will be taken over $\Z_{(p)}$ unless otherwise specified.

Let $\Sigma_n \subset \GL_n$ be the subgroup of permutation matrices, and let $B_n \subset \GL_n$ be the Borel subgroup of upper triangular matrices. Associated to these two subgroups are elements $\overline{\Sigma}_n, \overline{B}_n$ in the group algebra $R_n$ defined by
$$\overline{\Sigma}_n:=\sum\limits_{\sigma \in \Sigma_n}(-1)^{\sigma}\sigma, \hskip 1in \overline{B}_n:= \sum\limits_{b\in B_n}b.$$
Lemma 2 of \cite{Ste} states that $\overline{\Sigma}_n\overline{B}_n\overline{\Sigma}_n\overline{B}_n=c_n\cdot\overline{\Sigma}_n\overline{B}_n$, where $c_n$ is the constant
$$c_n=\prod\limits_{i=1}^{n}(p^i-1).$$
The number $c_n$ is invertible in $\Z_{(p)}$ and therefore the element
$$e_n=\frac{1}{c_n}\cdot \overline{\Sigma}_n\overline{B}_n$$
is an idempotent in the group algebra $R_n$.
\begin{mydef}
The element $e_n=\frac{1}{c_n}\cdot \overline{\Sigma}_n\overline{B}_n$ is called the \emph{Steinberg idempotent.} For any left $R_n$--module $M$, there is the $\Z_{(p)}$--submodule
$$e_nM=\{ e_nm: m\in M\} \subset M$$
which is called the \emph{Steinberg summand} of $M$. Taking the Steinberg summand is a functor from left $R_n$--modules to $\Z_{(p)}$-modules.
\end{mydef}

Note that $R_n$ is both a left $R_n$--module and a right $R_n$--module. Therefore, $R_ne_n$ is a left $R_n$-submodule of $R_n$.

\begin{mydef}
The left $R_n$--module $R_ne_n$ is denoted by $\St_n$ and is called the \emph{Steinberg module}.
\end{mydef}
There is an isomorphism of $\Z_{(p)}$-modules
$$e_nM \cong (\St_n \otimes M)_{\GL_n}.$$
The Steinberg module has dimension $p^{\binom{n}{2}}$ over $\Z_{(p)}$ -- this fact is a direct corollary of Propositions \ref{prop:pnchoose2} and \ref{prop:flagcomplexproof}, proven in a later section.




\begin{mydef}
The element $\hat{e}_n=\frac{1}{c_n}\cdot\overline{B}_n\overline{\Sigma}_n$ is called the \emph{conjugate Steinberg idempotent}. For any left $R_n$-module $M$, the $\Z_{(p)}$-submodule $\hat{e}_nM$ is called the \emph{conjugate Steinberg summand} of $M$.
\end{mydef}
The following two maps are inverse isomorphisms
$$e_nM \rightarrow \hat{e}_nM \hskip 1.5in \hat{e}_nM \rightarrow e_nM$$
$$\overline{\Sigma}_n\overline{B}_nm \mapsto \overline{B}_n\overline{\Sigma}_n\overline{B}_nm\hskip 1in \overline{B}_n\overline{\Sigma}_nm \mapsto \overline{\Sigma}_n\overline{B}_n\overline{\Sigma}_nm$$
because composing then in either order induces multiplication by the unit $c_n\in\Z_{(p)}^{\times}$. Therefore $e_nM$ and $\hat{e}_nM$ are isomorphic as $\Z_{(p)}$-modules.

\subsection{Products on Steinberg Summands}\label{sec:products}
Let $i$ and $j$ be positive integers. The block inclusion
$$\GL_i \times \GL_j \rightarrow \GL_{i+j}$$
$$(A, B) \mapsto \begin{pmatrix}A&0\\0&B \end{pmatrix}$$
gives a map $R_i\otimes R_j \rightarrow R_{i+j}$. We denote by $e_i\boxtimes e_j$ the image under this map of the idempotent $e_i\otimes e_j$ of $R_i\otimes R_j$. The idempotent $e_i\boxtimes e_j$ has the following relation to the idempotent $e_{i+j}$. Let $U_{i,j}$ denote the group of $(i+j)\times (i+j)$ matrices of the form $\begin{pmatrix}I_i & *\\ 0 & I_j \end{pmatrix}$. Let $\Sigma_{\mathrm{shuf}}(i,j)$ denote the set of $\binom{i+j}{i}$ permutations $\sigma$ with the property that
$$1\le a<b\le i\implies\sigma(a)<\sigma(b), \hskip 0.2in \text{and} \hskip 0.3in i+1\le a<b\le i+j\implies \sigma(a)<\sigma(b).$$

Such permutations are known as $(i,j)$--shuffle permutations. Define
$$\overline{U}_{i,j}=\sum\limits_{u\in U_{i,j}}u \hskip 1.5in \overline{\Sigma_{\mathrm{shuf}}(i,j)}=\sum\limits_{\sigma\in \Sigma_{\mathrm{shuf}}(i,j)}(-1)^{\sigma}\sigma$$
Then the following identities can be checked
$$\overline{U}_{i,j}\cdot\overline{B_i\times B_j}=\overline{B_i\times B_j}\cdot\overline{U}_{i,j}=\overline{B}_{i+j}$$
$$\overline{\Sigma_{\mathrm{shuf}}(i,j)}\cdot\overline{\Sigma_i\times \Sigma_j}=\overline{\Sigma_i\times \Sigma_j}\cdot\overline{\Sigma_{\mathrm{shuf}}(i,j)}=\overline{\Sigma}_{i+j}$$
$$\overline{U}_{i,j}\cdot \overline{\Sigma_i\times \Sigma_j}=\overline{\Sigma_i\times \Sigma_j}\cdot\overline{U}_{i,j}$$
Therefore,
\begin{equation}\label{eqn:Steinbergproductidentity}
\overline{\Sigma_{\mathrm{shuf}}(i,j)}\cdot\overline{U}_{i,j}(e_i\boxtimes e_j)=\frac{c_{i+j}}{c_ic_j}\cdot e_{i+j}
\end{equation}

\begin{mydef}\label{def:steinbergproduct}
There is a homomorphism of left $\Z_{(p)}[\GL_i\times \GL_j]$-modules
$$\St_i\otimes_{\Z_{(p)}}\St_j=\Z_{(p)}[\GL_i\times \GL_j](e_i\boxtimes e_j) \rightarrow \Z_{(p)}[\GL_{i+j}]e_{i+j}=\St_{i+j},$$
$$A(e_i\boxtimes e_j) \mapsto A\overline{\Sigma_{\mathrm{shuf}}(i,j)}\cdot\overline{U}_{i,j}(e_i\boxtimes e_j).$$
We refer to the map above as the \emph{Steinberg product.} The Steinberg product represents the following projection of summands which is functorial in the $R_{i+j}$-module $M$
$$(e_i\boxtimes e_j)M \rightarrow e_{i+j}M,$$
$$x \mapsto \overline{\Sigma_{\mathrm{shuf}}(i,j)}\cdot\overline{U}_{i,j}x.$$
\end{mydef}
The following two properties can be checked. \label{property:steinbergcommutativeassociative}
\begin{itemize}
\item (Associativity) The following diagram commutes.
$$\xymatrix{(e_i\boxtimes e_j\boxtimes e_k)M\ar[r]\ar[d] & (e_{i+j}\boxtimes e_k)M\ar[d]\\
(e_i\boxtimes e_{j+k})M\ar[r] & e_{i+j+k}M}$$
\item (Commutativity) Let $\sigma \in \Sigma_{i+j}$ be the shuffle permutation that increases every number by $j$ modulo $i+j$. Then $e_i\boxtimes e_j=\sigma^{-1}(e_j\boxtimes e_i)\sigma$, and therefore we have inverse isomorphisms $\sigma:(e_i\boxtimes e_j)M \rightarrow (e_j\boxtimes e_i)M$ and $\sigma^{-1}:(e_j\boxtimes e_i)M \rightarrow (e_i\boxtimes e_j)M$. The following diagram commutes.
$$\xymatrix{(e_i\boxtimes e_j)M\ar[r]^{\sigma}\ar[dr] & (e_j\boxtimes e_i)M\ar[r]^{\sigma^{-1}}\ar[d] & (e_i\boxtimes e_j)M\ar[dl]\\
& e_{i+j}M &}$$
\end{itemize}

For any fixed $n$, one should think of the various idempotents $\{e_{i_1}\boxtimes\cdots \boxtimes e_{i_k}\}_{i_1+\ldots+i_k=n}$ as functors from the category of left $R_n$--modules to the category of $\Z_{(p)}$--modules. The Steinberg product defines natural transformations among these functors, starting from the initial functor $e_1\boxtimes\cdots\boxtimes e_1$ and going to the final functor $e_n$.

\begin{proposition}
Let $f:e_{i+j}M \rightarrow (e_i\boxtimes e_j)M$ be the $\Z_{(p)}$-linear map 
$$f(e_{i+j}m) =\frac{c_ic_j}{c_{i+j}}\cdot (e_i\boxtimes e_j)e_{i+j}m.$$
Then the composition of $f$ with the Steinberg product as shown is the identity map.
$$\xymatrix{e_{i+j}M\ar[r]^{f} & (e_i\boxtimes e_j)M\ar[r] & e_{i+j}M}$$
\end{proposition}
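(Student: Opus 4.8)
The plan is to unwind both maps and reduce everything to the single identity \eqref{eqn:Steinbergproductidentity} together with the fact that $e_{i+j}$ is idempotent; there is no genuine difficulty here, so this is essentially the whole argument.

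First I would recall from Definition \ref{def:steinbergproduct} that the Steinberg product $(e_i\boxtimes e_j)M\to e_{i+j}M$ is nothing but left multiplication by the group-algebra element $\overline{\Sigma_{\mathrm{shuf}}(i,j)}\cdot\overline{U}_{i,j}$, that is, $x\mapsto \overline{\Sigma_{\mathrm{shuf}}(i,j)}\cdot\overline{U}_{i,j}\,x$. Composing with $f$, an arbitrary element $e_{i+j}m\in e_{i+j}M$ is carried to
$$\frac{c_ic_j}{c_{i+j}}\cdot\overline{\Sigma_{\mathrm{shuf}}(i,j)}\cdot\overline{U}_{i,j}\,(e_i\boxtimes e_j)\,e_{i+j}m .$$
Since every factor appearing here lies in $R_{i+j}$ and acts on the left of $M$, I can regroup the product and apply \eqref{eqn:Steinbergproductidentity}, which states that $\overline{\Sigma_{\mathrm{shuf}}(i,j)}\cdot\overline{U}_{i,j}\,(e_i\boxtimes e_j)=\frac{c_{i+j}}{c_ic_j}\,e_{i+j}$. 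The displayed element then equals
$$\frac{c_ic_j}{c_{i+j}}\cdot\frac{c_{i+j}}{c_ic_j}\,e_{i+j}e_{i+j}m=e_{i+j}^2m=e_{i+j}m,$$
the last step using idempotency of $e_{i+j}$. Hence the composite is the identity of $e_{i+j}M$.

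The one bookkeeping point I would mention in passing is that $f$ is well defined: $(e_i\boxtimes e_j)e_{i+j}$ is a fixed element of $R_{i+j}$, so its action on $M$ depends only on the element $e_{i+j}m$ and not on the chosen representative $m$, and the same remark applies to the Steinberg product. Beyond this, the proposition is a purely formal consequence of \eqref{eqn:Steinbergproductidentity}, so there is no real obstacle to overcome.
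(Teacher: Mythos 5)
Your proof is correct and follows exactly the argument the paper has in mind: unwind $f$ and the Steinberg product, apply Equation~\eqref{eqn:Steinbergproductidentity}, and use idempotency of $e_{i+j}$. The paper states this one-line justification without spelling out the computation; you have simply written it out in full.
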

\begin{proof}
This proposition is a direct result of Equation \ref{eqn:Steinbergproductidentity} and the fact that $e_{i+j}^2=e_{i+j}$.
\end{proof}

\subsection{The flag complex}\label{definition:steinbergsummand}

For any pointed space $X$, let $\Sigma X:= S^1\sm X$ denote the reduced suspension of $X$. Suppose that $X$ is a pointed topological space with $\GL_n$--action. In this section, we construct a spectrum $e_nX$ in a way that mirrors the algebra of the previous section. There is a splitting in the homotopy category of $p$-local spectra
$$\Sigma^{\infty}X \simeq e_nX \vee (1-e_n)X,$$
where $\vee$ denotes the wedge sum. The reason we must pass to spectra is because our construction involves desuspending spaces.

Fix a positive integer $n$, and let $\F_p^n$ denote a fixed $n$--dimensional vector space over the field $\F_p$. Let $\Br_n$ denote the nerve of the poset of subspaces of $\F_p^n$ that do not equal $0$ or $\F_p^n$. This poset, and therefore the associated nerve $\Br_n$, carries a left action of $\GL_n$. The following properties are well known, but are proved for the sake of completeness. All supporting proofs are deferred to the end of this section.

\begin{proposition}
The space $\Br_n$ has the homotopy type of a wedge of $p^{\binom{n}{2}}$ spheres of dimension $n-2$.
\end{proposition}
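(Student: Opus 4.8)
This is the Solomon--Tits theorem for the Tits building of $\GL_n(\F_p)$. I would prove it in two stages: first determine the homotopy type, then count the spheres.

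\emph{Homotopy type.} The poset of proper nonzero subspaces of $\F_p^n$ is pure: a maximal chain is a complete flag $0\subsetneq V_1\subsetneq\cdots\subsetneq V_{n-1}\subsetneq \F_p^n$, contributing an $(n-2)$-simplex, so $\Br_n$ is a pure $(n-2)$-dimensional simplicial complex. It therefore suffices to show that $\Br_n$ is $(n-3)$-connected, since a pure $d$-dimensional complex that is $(d-1)$-connected is homotopy equivalent to a wedge of $d$-spheres: its reduced homology is concentrated in degree $d$ and free (a subgroup of the $d$-chains), and Hurewicz plus Whitehead promote a homology isomorphism from a wedge of $d$-spheres to a homotopy equivalence. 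The cases $n=1$ ($\Br_1=\varnothing$) and $n=2$ ($\Br_2$ is the discrete set of $p+1$ lines in $\F_p^2$) are immediate.

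To prove $(n-3)$-connectivity I would run the classical apartment argument, the induction on $n$ entering through the links: the link in $\Br_n$ of a $k$-dimensional subspace $V$ is the join $\Br(V)*\Br(\F_p^n/V)$, which by the inductive hypothesis is a wedge of $(k-2)$- and $(n-k-2)$-spheres, hence $(n-3)$-connected, as it should be. Fix a basis $f_1,\dots,f_n$ of $\F_p^n$ and let $\Sigma\subset\Br_n$ be the full subcomplex spanned by the coordinate subspaces $\langle f_i:i\in T\rangle$ for $\varnothing\neq T\subsetneq\{1,\dots,n\}$; then $\Sigma$ is the barycentric subdivision of $\partial\Delta^{n-1}$, hence homeomorphic to $S^{n-2}$. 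One builds $\Br_n$ from $\Sigma$ by adjoining the remaining chambers (complete flags) one at a time in a suitable order --- for instance by gallery distance from a fixed chamber $C_0$ of $\Sigma$ --- and, using the retraction $\rho_{\Sigma,C_0}$ of the building onto $\Sigma$, checks that each newly adjoined chamber $C$ meets the subcomplex constructed so far either in a nonempty contractible union of faces of $\partial C$ (so the homotopy type is unchanged) or in all of $\partial C$ (so one wedges on an $(n-2)$-sphere). Hence the pair $(\Br_n,\Sigma)$ is $(n-3)$-connected, and since $\Sigma\cong S^{n-2}$ is $(n-3)$-connected, so is $\Br_n$; thus $\Br_n\simeq\bigvee S^{n-2}$. (Alternatively one may cite that buildings are shellable in the sense of Bj\"orner--Wachs, which yields the same conclusion directly.)

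\emph{The count.} Knowing $\Br_n\simeq\bigvee_k S^{n-2}$, the integer $k$ is pinned down by the reduced Euler characteristic, $\tilde\chi(\Br_n)=(-1)^{n}k$. By Philip Hall's theorem $\tilde\chi(\Br_n)=\mu_{L_n}(0,\F_p^n)$, the M\"obius function of the lattice $L_n$ of all subspaces of $\F_p^n$ evaluated from bottom to top, because $\Br_n$ is the order complex of the open interval $(0,\F_p^n)$ in $L_n$. The M\"obius function of a subspace lattice is the classical value $\mu_{L_n}(0,\F_p^n)=(-1)^n p^{\binom{n}{2}}$, which follows from the defining recursion $\sum_{W\subseteq V}\mu(0,W)=[V=0]$ together with the $q$-binomial theorem; hence $k=p^{\binom{n}{2}}$. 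For a more hands-on argument in the spirit of the rest of the paper, one can instead exhibit the \emph{Steinberg basis} of $\tilde H_{n-2}(\Br_n;\Z)$, given by the fundamental classes of the $p^{\binom{n}{2}}$ apartments $u\Sigma$ with $u$ ranging over the unipotent radical of the standard Borel, but verifying that these form a basis is more work than the Euler-characteristic computation.

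The main obstacle is the connectivity claim in the second stage: the combinatorial bookkeeping showing that adjoining the non-apartment chambers attaches cells only in the top dimension up to homotopy (equivalently, verifying the shelling condition). Once that is in place, the Euler-characteristic computation and the evaluation of the subspace-lattice M\"obius function are routine.
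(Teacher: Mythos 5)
Your proposal is correct, but it follows the classical Solomon--Tits route rather than the argument the paper actually uses. The paper gives a shorter, self-contained induction on $n$: fix a hyperplane $H\subset\F_p^n$ and let $\Po\subset\Br_n$ be the subcomplex of subspaces meeting $H$ nontrivially. The poset map $W\mapsto W\cap H$ shows $\Po$ is contractible (it is $\le$ the identity and $\ge$ the constant at $H$), so $\Br_n\simeq\Br_n/\Po$. The only simplices outside $\Po$ are flags whose minimal term is a line $L$ transverse to $H$, and collapsing $\Po$ splits the quotient as a wedge $\bigvee_{L\perp H}(\Br_n)_{\ge L}/(\Br_n)_{>L}$. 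Since $(\Br_n)_{\ge L}$ is a cone on $L$ and $(\Br_n)_{>L}\cong\Br(\F_p^n/L)\simeq\Br_{n-1}$, each wedge summand is $\Sigma\Br_{n-1}$, and there are $p^{n-1}$ transverse lines, so the homotopy type and the sphere count $p^{n-1}\cdot p^{\binom{n-1}{2}}=p^{\binom{n}{2}}$ fall out of a single induction. By contrast, your argument decouples the two tasks: you establish $(n-3)$-connectivity by the apartment/shelling argument (which, as you note, is where the real combinatorial work lives) and then obtain the count separately from the reduced Euler characteristic via the M\"obius function of the subspace lattice. Both are valid; the shelling argument generalizes to arbitrary Tits buildings and gives the Solomon--Tits theorem in full, whereas the paper's hyperplane-collapse argument is special to the type $A$ case but is considerably more elementary and delivers the count for free, which suits the paper's emphasis on explicit chain-level computations (compare its proof of Proposition~\ref{prop:flagcomplexproof}). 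One small note: you would want to record that the wedge decomposition, not just the abstract homotopy type, can be chosen $B_n$-equivariantly for the later identification with $\St_n$; the paper handles this by exhibiting the explicit Steinberg cycles $s_m$ in the next proposition rather than by appealing to the apartment basis.
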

\begin{proposition}
There is an isomorphism of $\Z_{(p)}[\GL_n]$-modules
$$H_{n-2}(\Br_n;\Z_{(p)})\cong \St_n.$$
\end{proposition}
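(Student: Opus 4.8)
The plan is to identify the reduced homology $\widetilde{H}_{n-2}(\Br_n;\Z_{(p)})$ --- which is what the statement refers to, since e.g. $\Br_2$ is disconnected --- with a left ideal of $R_n$ that is literally equal to $\St_n=R_ne_n$, rather than merely abstractly isomorphic to it. Because a chain in the poset of proper nonzero subspaces of $\F_p^n$ has at most $n-1$ terms, the complex $\Br_n$ is $(n-2)$-dimensional, so $\widetilde{H}_{n-2}(\Br_n)=\ker\big(\partial\colon \widetilde{C}_{n-2}(\Br_n)\to\widetilde{C}_{n-3}(\Br_n)\big)$ is a $\GL_n$-submodule of $\widetilde{C}_{n-2}(\Br_n)$. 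The nondegenerate $(n-2)$-simplices are exactly the complete flags $0\subsetneq V_1\subsetneq\cdots\subsetneq V_{n-1}\subsetneq\F_p^n$; the group $\GL_n$ permutes these transitively, with no orientation ambiguity since the simplices of a nerve are canonically oriented by the poset order, and the stabilizer of the standard flag $F_0=\big(\langle e_1\rangle\subsetneq\langle e_1,e_2\rangle\subsetneq\cdots\big)$ is the Borel subgroup $B_n$. I would then compose the resulting $\GL_n$-isomorphism $\widetilde{C}_{n-2}(\Br_n)\cong\Z_{(p)}[\GL_n/B_n]$, $gF_0\mapsto gB_n$, with the $\GL_n$-isomorphism $\Z_{(p)}[\GL_n/B_n]\xrightarrow{\,\sim\,}R_n\overline{B}_n\subseteq R_n$, $gB_n\mapsto g\overline{B}_n$ (an isomorphism onto the left ideal $R_n\overline{B}_n$, because distinct cosets of $B_n$ involve disjoint sets of group elements), to obtain a $\GL_n$-module isomorphism $\alpha\colon \widetilde{C}_{n-2}(\Br_n)\xrightarrow{\,\sim\,}R_n\overline{B}_n$ with $\alpha(gF_0)=g\overline{B}_n$.

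The next step is to compute the image of $\widetilde{H}_{n-2}(\Br_n)$ under $\alpha$. Let $A_0\subseteq\Br_n$ be the standard apartment, the full subcomplex on the proper nonzero coordinate subspaces $\langle e_i : i\in S\rangle$; its realization is the barycentric subdivision of $\partial\Delta^{n-1}$, an $(n-2)$-sphere, whose top simplices are precisely the flags $\sigma F_0$ for $\sigma\in\Sigma_n$. A generator of $\widetilde{H}_{n-2}(A_0)\cong\Z$ is represented in the chain complex by the standard fundamental cycle $z_0=\sum_{\sigma\in\Sigma_n}(-1)^{\sigma}\sigma F_0$, so in particular $z_0$ is a cycle and $z_0\in\widetilde{H}_{n-2}(\Br_n)$, with $\alpha(z_0)=\sum_{\sigma}(-1)^{\sigma}\sigma\overline{B}_n=\overline{\Sigma}_n\overline{B}_n=c_ne_n$. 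By the Solomon--Tits theorem --- equivalently, by the argument proving the previous proposition, in which every cycle of $\Br_n$ is exhibited as a $\Z_{(p)}$-linear combination of apartment cycles while $\GL_n$ acts transitively on apartments --- the module $\widetilde{H}_{n-2}(\Br_n)$ is generated over $\GL_n$ by $z_0$, i.e. $\widetilde{H}_{n-2}(\Br_n)=R_nz_0$. Since $c_n$ is a unit of $\Z_{(p)}$, applying the injective $R_n$-map $\alpha$ yields $\alpha\big(\widetilde{H}_{n-2}(\Br_n)\big)=R_n(c_ne_n)=R_ne_n=\St_n$; hence $\alpha$ restricts to an isomorphism of left $\Z_{(p)}[\GL_n]$-modules $\widetilde{H}_{n-2}(\Br_n;\Z_{(p)})\xrightarrow{\,\sim\,}\St_n$.

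The only substantive input here is the Solomon--Tits theorem, that $\Br_n$ is spherical with top homology spanned by the signed apartment cycles; this is essentially the content of the previous proposition and can simply be quoted, after which the rest is formal manipulation of the Steinberg idempotent together with Lemma 2 of \cite{Ste}. I therefore expect the parts demanding care to be bookkeeping rather than conceptual: first, the sign and orientation conventions --- verifying that $\sum_{\sigma}(-1)^{\sigma}\sigma F_0$ is genuinely a fundamental cycle of the apartment sphere and that the identification $\widetilde{C}_{n-2}(\Br_n)\cong\Z_{(p)}[\GL_n/B_n]$ contributes no stray signs --- and second, avoiding a subtle circularity, namely that one may not invoke $\dim_{\Z_{(p)}}\St_n=p^{\binom{n}{2}}$, which in this paper is deduced only afterward from the present proposition and the homotopy-type computation. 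The argument above sidesteps the second issue by constructing an honest $R_n$-module isomorphism instead of a surjection that would then need to be shown injective by a rank count.
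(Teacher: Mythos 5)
Your proposal is correct and takes essentially the same route the paper does: both arguments identify the top chain group with $\Z_{(p)}[\GL_n]\overline{B}_n$ via flags, exhibit the signed permutation sums of a flag (what you call the apartment cycle $z_0$, the paper's $s_m$) as cycles, show those generate the top homology as a $\GL_n$-module, and then conclude via $\overline{\Sigma}_n\overline{B}_n = c_n e_n$ with $c_n$ a unit. Two things worth flagging, neither fatal:

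First, your parenthetical ``equivalently, by the argument proving the previous proposition, in which every cycle of $\Br_n$ is exhibited as a $\Z_{(p)}$-linear combination of apartment cycles'' misdescribes what the paper's Proposition 7 actually does. That proof is an inductive contraction of the subcomplex of subspaces meeting a fixed hyperplane, and it never mentions apartments. It is the paper's proof of \emph{this} proposition (Proposition 8) that establishes the apartment-cycle spanning statement, via the transverse-flag argument: fixing a complete flag $\mathcal{F}$ and showing $s_m = (W_1\subset\cdots\subset W_{n-1}) + (\text{non--}\mathcal{F}\text{--transverse terms})$ for $W_\bullet$ transverse to $\mathcal{F}$, so that the $s_m$'s are upper-triangular against the transverse flags. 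If you want the spanning result as an input, you should cite Solomon--Tits outright rather than deflect to the previous proposition; if you want the paper's self-contained argument, reproduce the transverse-flag computation.

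Second, you correctly anticipate that the conclusion needs an honest module map rather than a rank comparison, and you set this up cleanly with $\alpha$. Note though that the paper's closing sentence also leans on a rank count over $\Z_{(p)}$ (``the dimension of the span is at least $p^{\binom{n}{2}}$, this is the dimension of the entire space, so the two are equal''), which over $\Z_{(p)}$ technically requires knowing that the span surjects onto, not merely has the same rank as, $H_{n-2}$; the transverse-flag triangularity does give surjectivity, but the paper doesn't belabor the point. Your packaging with the injective $\alpha$ and the identity $R_n(c_n e_n)=R_ne_n$ is a slightly cleaner way to land the same conclusion. You still owe the sign check that $z_0=\sum_\sigma(-1)^\sigma\sigma F_0$ is actually a cycle — the paper verifies $\partial(s_m)=0$ by pairing off codimension-one faces — so don't drop that if you write this up in full.
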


As constructed, $\Br_n$ is not a pointed space. Let $\Br_n^{\Diamond}$ denote the unreduced suspension of $\Br_n$. The space $\Br_n^{\Diamond}$ is the geometric realization of a simplicial set where the $k$-simplices are flags $[W_0\subseteq \cdots\subseteq W_k]$ with the property that either $W_0=0$ or $W_k=\F_p^n$ but not both.
\begin{multicols}{2}
\begin{tikzpicture}

\fill (3,0) circle (4pt);
\fill [blue] (0,2) circle (4pt);
\fill [blue] (2,2) circle (4pt);
\fill [blue] (4,2) circle (4pt);
\fill [blue] (6,2) circle (4pt);
\fill (3,4) circle (4pt);
\draw [draw=black,thick] (3,0) -- (0,2);
\draw [draw=black,thick] (3,0) -- (2,2);
\draw [draw=black,thick] (3,0) -- (4,2);
\draw [draw=black,thick] (3,0) -- (6,2);
\draw [draw=black,thick] (3,4) -- (0,2);
\draw [draw=black,thick] (3,4) -- (2,2);
\draw [draw=black,thick] (3,4) -- (4,2);
\draw [draw=black,thick] (3,4) -- (6,2);
\node [below] at (3,0) {0};
\node [left] at (0,2) {$L_0$};
\node [left] at (2,2) {$L_1$};
\node [left] at (4,2) {$L_2$};
\node [left] at (6,2) {$L_3$};
\node [above] at (3,4) {$\F_3^2$};

\end{tikzpicture}

For example, let $p=3$. There are four one--dimensional subspaces of $\F_3^2$, which we denote by $L_0, L_1, L_2,$ and $L_3$. Pictured to the left is the topological space $\Br_2^{\Diamond}$. As a pointed space, it is homotopy equivalent to $\bigvee\limits_{3}S^1$. The blue points alone are $\Br_2$, which is homotopy equivalent to $\bigvee\limits_{3}S^0$.
\end{multicols}
\vskip 0.2in
Then $\Br_n^{\Diamond}$ is a pointed space with the 0-simplex $[0]$ as the basepoint. Its $\Z_{(p)}$-homology is as follows
$$\tilde{H}_*(\Br_n^{\Diamond}; \Z_{(p)})\simeq \begin{cases}
\St_n \hskip 0.3in *=n-1\\
0 \hskip 0.45in *\neq n-1
\end{cases}$$
If we smash the space $\Br_n^{\Diamond}$ by the negative sphere $S^{-(n-1)}$, we obtain the spectrum $\Sigma^{1-n}\Br_n^{\Diamond}$ whose homology is concentrated in degree 0. The spectrum $\Sigma^{1-n}\Br_n^{\Diamond}$ should be thought of as a topological analogue to the Steinberg module.
\begin{mydef}\label{def:Steinbergsummand}
Let $X$ be a spectrum with $\GL_n$-action. Then the {\bf Steinberg summand} of $X$, denoted $e_nX$, is defined as
$$e_nX = (\Sigma^{1-n}\Br_n^{\Diamond}\sm X)\sm_{\GL_n} (E\GL_n)_+$$
\end{mydef}
When $Y$ is any pointed space or spectrum with $\GL_n$-action, we henceforth use $Y_{h\GL_n}$ to denote the homotopy orbit space
$$Y_{h\GL_n}:= Y\sm_{\GL_n} (E\GL_n)_+.$$

\label{example:spectralsequence}As an example, let us compute the mod homology of $e_nX$, and show that it is equal to the Steinberg summand of the $R_n$-module $H_*(X;\Z_{(p)})$. The Hochschild--Serre spectral sequence associated to the fiber sequence
$$(\Sigma^{1-n}\Br_n^{\Diamond}\sm X) \rightarrow (\Sigma^{1-n}\Br_n^{\Diamond}\sm X)_{h\GL_n} \rightarrow B\GL_n$$
has $E^2$--page
$$E^2_{i,j}=H_i(B\GL_n; H_j(\Sigma^{1-n}\Br_n^{\Diamond}\sm X;\Z_{(p)}))\implies H_{i+j}((\Sigma^{1-n}\Br_n^{\Diamond}\sm X)_{h\GL_n};\Z_{(p)})$$
The homology group $H_0(\Sigma^{1-n}\Br_n^{\Diamond};\Z_{(p)})\cong \St_n$ is a projective $R_n$--module, and therefore flat. It follows by the K{\"u}nneth formula that
$$H_j(\Sigma^{1-n}\Br_n^{\Diamond}\sm X;\Z_{(p)})\simeq \St_n\otimes_{\Z_{(p)}} H_j(X;\Z_{(p)}).$$
Provided that $H_j(X;\Z_{(p)})$ is finite-dimensional over $\Z_{(p)}$, the $R_n$--module $\St_n\otimes_{\Z_{(p)}} H_j(X;\Z_{(p)})$ is projective, and so it has no higher $\GL_n$-homology. Thus, our $E^2$--page is
$$H_0(B\GL_n; \St_n\otimes_{\Z_{(p)}}H_*(X;\Z_{(p)})) = \St_n\otimes_{\Z_{(p)}[\GL_n]}H_*(X;\Z_{(p)}),$$
which is by definition the Steinberg summand $e_nH_*(X;\Z_{(p)})$. The $E^2$--page is concentrated on a single vertical line and therefore the spectral sequence collapses.

This argument is functorial in the pointed space $X$, and therefore implies the following commutative diagram of functors. Here, $\GL_n\Sp$ denotes the category of spectra of finite type with na\"{i}ve $\GL_n$-action, and $\mathrm{GrMod}_{R_n}$ denotes the category of graded left $R_n$--modules.
$$\xymatrix{\GL_n\Sp\ar[r]^{e_n(-)}\ar[d]_{H_*(-;\Z_{(p)})} & \Sp\ar[d]^{H_*(-;\Z_{(p)})}\\
\mathrm{GrMod}_{R_n}\ar[r]_{e_n(-)} & \mathrm{GrMod}_{\Z_{(p)}}}$$

As defined, we have no reason to believe the promise that $e_nX$ is a summand of $\Sigma^{\infty}X$. Proposition \ref{prop:stabletransfer} below implies that there are natural transformations of endofunctors on the category $\GL_n\Sp$,
$$e_n(-) \rightarrow \mathrm{Id}, \hskip 1in \mathrm{Id}\rightarrow e_n(-),$$
such that the composition $e_n(-) \rightarrow \mathrm{Id} \rightarrow e_n(-)$ is multiplication by a unit in $\Z_{(p)}$. Therefore, if we define the spectrum $(1-e_n)X$ as the homotopy fiber
$$(1-e_n)X := \mathrm{hofib}(\Sigma^{\infty}X\rightarrow e_nX)$$
then the cofiber sequence $(1-e_n)X \rightarrow \Sigma^{\infty}X \rightarrow e_nX$ splits, as promised.

\begin{proposition}\label{prop:stabletransfer}
There are maps
$$\Sigma^{1-n}\Br_n^{\Diamond} \rightarrow \Sigma_+^{\infty}\GL_n , \hskip 1in \Sigma_+^{\infty}\GL_n \rightarrow \Sigma^{1-n}\Br_n^{\Diamond}$$
such that the composition $\Sigma^{1-n}\Br_n^{\Diamond} \rightarrow \Sigma_+^{\infty}\GL_n \rightarrow \Sigma^{1-n}\Br_n^{\Diamond}$ is multiplication by a unit in $\Z_{(p)}$.
\end{proposition}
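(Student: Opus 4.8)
The plan is to build $\GL_n$-equivariant maps $\iota\colon Y\to\Sigma^{\infty}_+\GL_n$ and $\pi\colon\Sigma^{\infty}_+\GL_n\to Y$, where $Y:=\Sigma^{1-n}\Br_n^{\Diamond}$ and $\GL_n$ acts on $\Sigma^{\infty}_+\GL_n$ by left translation, whose effect on reduced homology is, respectively, the inclusion of the Steinberg module as the left ideal $\Z[\GL_n]\overline{\Sigma}_n\overline{B}_n\subseteq\Z[\GL_n]$, and the map $x\mapsto x\overline{\Sigma}_n\overline{B}_n$. The reason the composite is then forced to be a scalar is that $\Br_n^{\Diamond}$ has the homotopy type of a wedge of $p^{\binom{n}{2}}$ copies of $S^{n-1}$ (recalled above), so $Y$ is equivalent to a wedge of sphere spectra; a map out of a wedge of sphere spectra is detected by its effect on $\pi_0$, so $[Y,Y]\to\mathrm{End}(\tilde H_0(Y))$ is an isomorphism. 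Thus it is enough to compute $(\pi\iota)_*$ on $\tilde H_0(Y)=\tilde H_{n-1}(\Br_n^{\Diamond})$, which is the Steinberg module $\St_n$ (recalled above), realised concretely as $\Z[\GL_n]\overline{\Sigma}_n\overline{B}_n$.

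To construct $\pi$, fix a basis $e_1,\dots,e_n$ of $\F_p^n$ and let $A\subseteq\Br_n$ be the standard apartment, i.e.\ the full subcomplex on the proper nonzero coordinate subspaces. Then $A$ is the Coxeter complex of $\Sigma_n$, hence homeomorphic to $S^{n-2}$, so its unreduced suspension $A^{\Diamond}\subseteq\Br_n^{\Diamond}$ is a copy of $S^{n-1}$. Orienting it in the standard way, the inclusion $A^{\Diamond}\hookrightarrow\Br_n^{\Diamond}$ desuspends to a map $\bar z\colon\Sph\to Y$ carrying the generator of $\pi_0\Sph$ to the apartment fundamental class, which under the standard identification $\tilde H_{n-1}(\Br_n^{\Diamond})\cong\St_n$ is $\overline{\Sigma}_n\overline{B}_n$. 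Then set $\pi|_{\Sph_g}:=g\cdot\bar z$ on the wedge summand $\Sph_g$ of $\Sigma^{\infty}_+\GL_n=\bigvee_{g\in\GL_n}\Sph_g$; this is manifestly $\GL_n$-equivariant, and on $\tilde H_0$ it is the $\Z[\GL_n]$-linear map $\Z[\GL_n]\to\St_n$, $x\mapsto x\overline{\Sigma}_n\overline{B}_n$.

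To construct $\iota$, collapse $\Br_n^{\Diamond}$ by its $(n-2)$-skeleton. Its top simplices are the complete flags of $\F_p^n$, each occurring twice --- once with $0$ adjoined at the bottom and once with $\F_p^n$ adjoined at the top --- and $\GL_n$ preserves this partition into two copies of $\GL_n/B_n$. Projecting onto the summands in the ``$\F_p^n$-at-the-top'' copy gives an equivariant map $\Br_n^{\Diamond}\to S^{n-1}\sm(\GL_n/B_n)_+$, which desuspends to $Y\to\Sigma^{\infty}_+(\GL_n/B_n)$; composing with the transfer $\Sigma^{\infty}_+(\GL_n/B_n)\to\Sigma^{\infty}_+\GL_n$ of the quotient map $\GL_n\twoheadrightarrow\GL_n/B_n$ defines $\iota$. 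On $\tilde H_0$ it is the composite $\tilde H_{n-1}(\Br_n^{\Diamond})\hookrightarrow C_{n-1}(\Br_n^{\Diamond})\twoheadrightarrow\Z[\GL_n/B_n]\cong\Z[\GL_n]\overline{B}_n\hookrightarrow\Z[\GL_n]$, where the last isomorphism and the transfer both send $[gB_n]\mapsto g\overline{B}_n$. One checks that no nonzero top cycle is supported on the ``$0$-at-the-bottom'' simplices, so the projection is injective on cycles, and that the apartment class is carried to $\overline{\Sigma}_n\overline{B}_n\in\Z[\GL_n]$; thus $\iota_*$ is the inclusion $\St_n=\Z[\GL_n]\overline{\Sigma}_n\overline{B}_n\hookrightarrow\Z[\GL_n]$.

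Combining the two computations, $(\pi\iota)_*$ sends $r\overline{\Sigma}_n\overline{B}_n\in\St_n$ to $r\overline{\Sigma}_n\overline{B}_n\overline{\Sigma}_n\overline{B}_n=c_n\cdot r\overline{\Sigma}_n\overline{B}_n$ by Lemma~2 of \cite{Ste}, so $(\pi\iota)_*=c_n\cdot\mathrm{id}$ (up to a sign that does not affect the argument), and hence $\pi\iota\simeq c_n\cdot\mathrm{id}_Y$ by the first paragraph. Since $c_n=\prod_{i=1}^{n}(p^i-1)$ is prime to $p$, it is a unit in $\Z_{(p)}$, which is the claim. The step I expect to demand the most care is the homological bookkeeping inside the construction of $\iota$ --- describing the top cells of $\Br_n^{\Diamond}$, recognising the two $\GL_n$-stable copies of $\GL_n/B_n$, and confirming that ``project, then transfer'' lands on exactly the lattice $\Z[\GL_n]\overline{\Sigma}_n\overline{B}_n$ rather than some other submodule of $\Z[\GL_n]$; this is the usual dictionary between the Tits building and the Steinberg module, but the orientation and sign conventions have to be tracked precisely, since they are what make $\overline{\Sigma}_n$, and hence the Steinberg idempotent, appear.
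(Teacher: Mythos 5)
Your proof is correct and shares the overall skeleton of the paper's argument --- one map built by collapsing to the top cells and transferring, the other engineered to realize $x\mapsto x\overline{\Sigma}_n\overline{B}_n$, with Steinberg's identity $\overline{\Sigma}_n\overline{B}_n\overline{\Sigma}_n\overline{B}_n=c_n\overline{\Sigma}_n\overline{B}_n$ giving the unit --- but the two halves are constructed rather differently. For $\iota$, you make explicit a point the paper's proof elides: the top cells of the unreduced suspension $\Br_n^{\Diamond}$ split $\GL_n$-equivariantly into two copies of $\GL_n/B_n$ (flags with $0$ adjoined at the bottom versus $\F_p^n$ adjoined at the top), and you must project onto one copy before transferring; the paper simply writes $\Br^{(n-1)}/\Br^{(n-2)}\simeq(\GL_n/B_n)_+\sm S^{n-1}$ as if there were a single copy, which is at best an implicit choice. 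Your construction of $\pi$ is genuinely different and arguably cleaner: you build $\pi$ by geometrically picking out the standard apartment $A^{\Diamond}\cong S^{n-1}$ inside $\Br_n^{\Diamond}$, whose fundamental class realizes $\overline{\Sigma}_n\overline{B}_n$, and then extend $\GL_n$-equivariantly over $\Sigma^{\infty}_+\GL_n=\bigvee_g\Sph_g$; the paper instead starts from the \emph{right} multiplication $(-)\cdot\overline{\Sigma}_n$ on $\Sigma^{\infty}_+\GL_n$, projects to the top-cell quotient, and shows the resulting map lifts through the $(n-1)$-skeleton because $\partial s_A=0$, which requires tracking the boundary map in the skeletal filtration. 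Your route trades that lifting argument for the identification of the apartment class with $\overline{\Sigma}_n\overline{B}_n$. Finally, you add a small but worthwhile precision: you observe that $Y$ is a finite wedge of sphere spectra so $[Y,Y]\to\mathrm{End}(\tilde H_0 Y)$ is an isomorphism, upgrading ``multiplication by $c_n$ on homology'' to ``homotopic to $c_n\cdot\mathrm{id}$,'' a step the paper leaves implicit.
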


There are product maps as well. For any finite dimensional $\F_p$-vector space $V$, let $\Br_V$ denote the nerve of the poset of subspaces of $V$ which do not equal $0$ or $V$. If $\dim(V)=n$, then $\Br_V\simeq \Br_n$. Let $\tilde{\Br}_V$ denote the nerve of the poset of subspaces of $V$, including 0 and $V$ itself. The space $\tilde{\Br}_V$ is contractible, because the poset of subspaces of $V$ has an initial element 0. There is an obvious inclusion of simplicial sets $\Br_V^{\Diamond}\subset \tilde{\Br}_V$. If $V\simeq V'\oplus V''$, then there is a product map
$$\tilde{\Br}_{V'}\times \tilde{\Br}_{V''} \rightarrow \tilde{\Br}_V$$
$$(W', W'') \mapsto W'\oplus W''$$
When the above map is restricted to either $\tilde{\Br}_{V'}\times \Br_{V''}^{\Diamond}$ or $\Br_{V'}^{\Diamond}\times \tilde{\Br}_{V''}$, it lands in the subspace $\Br_V^{\Diamond}$. Therefore the product above restricts
$$(\tilde{\Br}_{V'}\times \Br_{V''}^{\Diamond}) \cup_{\Br_{V'}^{\Diamond}\times \Br_{V''}^{\Diamond}} \Br_{V'}^{\Diamond}\times \tilde{\Br}_{V''} \rightarrow \Br_V^{\Diamond}.$$
But since $\tilde{\Br}_{V'}$ and $\tilde{\Br}_{V''}$ are both contractible, the union above is homotopy equivalent to the unreduced join $\Br_{V'}^{\Diamond}\star \Br_{V''}^{\Diamond}$. We have constructed a product on flag complexes, namely
$$\Sigma \Br_{V'}^{\Diamond}\sm \Sigma\Br_{V''}^{\Diamond} \rightarrow \Sigma\Br_V^{\Diamond}.$$

If we choose isomorphisms $V'\cong \F_p^i$ and $V''\cong \F_p^j$, and the isomorphism $V' \oplus V'' \cong V$ is given by block inclusion, then it is easily checked when we take the top homology of the above product on flag complexes, we recover the product $\St_i \otimes \St_j \rightarrow \St_{i+j}$ on Steinberg modules.

\begin{proposition}\label{proposition:induction}
Let $V$ be a finite dimensional $\F_p$-vector space, and let $W$ be a subspace. Let $P_W\subset \GL(V)$ denote the parabolic subgroup of matrices preserving $W$. Let $\mathcal{S}_W$ denote the $P_W$-set
$$\mathcal{S}_W=\{W'\subset V: W+W'=V \; \text{and} \; W\cap W'=0\}.$$
Then the $P_W$-equivariant product map
$$\Sigma\Br_W^{\Diamond} \sm \bigvee\limits_{W'\in\mathcal{S}_W}\Sigma\Br_{W'}^{\Diamond} \rightarrow \Sigma\Br_V^{\Diamond}$$
is a homotopy equivalence of pointed spaces. The wedge sum shown is taken over all subspaces $W'$ complementary to $W$. (i.e. such that $W+W'=V$ and $W\cap W'=0$)
\end{proposition}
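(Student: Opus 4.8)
The plan is to prove the homotopy equivalence by identifying both sides with the (iterated) unreduced join of flag complexes and checking that the product map realizes the natural join decomposition. First I would unwind the construction of the product map from the preceding paragraph: starting from $V \cong W \oplus W'$ for a fixed complement $W' \in \mathcal{S}_W$, the map $\Sigma\Br_W^\Diamond \sm \Sigma\Br_{W'}^\Diamond \to \Sigma\Br_V^\Diamond$ factors through $\Sigma$ applied to the unreduced join $\Br_W^\Diamond \star \Br_{W'}^\Diamond$, which in turn sits inside $\Sigma\Br_V^\Diamond$ via the map $(A', A'') \mapsto A' \oplus A''$ on the respective posets of subspaces (where one of $A', A''$ is allowed to be $0$ or the whole space). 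So the $P_W$-equivariant map in question is adjoint to a map $\bigvee_{W' \in \mathcal{S}_W}\bigl(\Br_W^\Diamond \star \Br_{W'}^\Diamond\bigr) \to \Br_V^\Diamond$ after a single desuspension, and it suffices to show this is a homotopy equivalence.

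Next I would set up the right filtration or decomposition of $\Br_V^\Diamond$ to match this wedge. The key observation is that $\Br_V^\Diamond$ is the nerve of the poset $\mathcal{Q}$ of subspaces $U \subsetneq V$ \emph{or} $U = V$ with $U \neq 0$... more precisely, recall from the excerpt that $\Br_V^\Diamond$ has $k$-simplices the flags $[U_0 \subseteq \cdots \subseteq U_k]$ with $U_0 = 0$ or $U_k = V$ but not both. I would stratify these flags according to the ``pivot'' subspace: for a flag in $\Br_V^\Diamond$, one can canonically associate the interval in the flag lying strictly between $0$ and $V$, together with the position where $W$-related and $W'$-related data separate. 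The cleanest route is probably to use Proposition \ref{proposition:induction}'s target as a homotopy colimit: cover $\tilde\Br_V$ (which is contractible) by the contractible subposets $\{U : U \supseteq W\}$ and $\{U : U \cap W = 0\}$ — no, better: use that $\Br_V^\Diamond \simeq \tilde\Br_{W} \star \tilde\Br_{W'} / (\text{appropriate identification})$. I would instead directly invoke the product construction already built in the excerpt: since $\tilde\Br_{V'}$ and $\tilde\Br_{V''}$ are contractible, the excerpt shows $\Sigma\Br_W^\Diamond \sm \Sigma\Br_{W'}^\Diamond \to \Sigma\Br_V^\Diamond$ comes from $(\tilde\Br_W \times \Br_{W'}^\Diamond) \cup_{\Br_W^\Diamond \times \Br_{W'}^\Diamond}(\Br_W^\Diamond \times \tilde\Br_{W'}) \to \Br_V^\Diamond$. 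So the real content is: the union over all complements $W' \in \mathcal{S}_W$ of these pushout pieces exhausts $\Br_V^\Diamond$, and the pieces are glued along contractible (or empty) intersections, so the total map is a homotopy equivalence.

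Concretely, the main steps are: (1) Show every simplex $[U_0 \subseteq \cdots \subseteq U_k]$ of $\Br_V^\Diamond$ lies in the image of the piece associated to \emph{some} complement $W'$ — this uses that given a proper nonzero subspace $U$ we can choose a complement $W'$ of $W$ such that $U$ is ``compatible'' with the $W \oplus W'$ decomposition, i.e. $U = (U \cap W) \oplus (U \cap W')$; elementary linear algebra shows such $W'$ exists. (2) Show the map is injective on the interiors of simplices up to the $P_W$-action, so that $\Br_V^\Diamond$ is genuinely the union; equivalently, analyze when two pieces overlap and check the overlap deformation-retracts appropriately. (3) Assemble via a Mayer–Vietoris / nerve-of-a-cover argument, or by exhibiting an explicit $P_W$-equivariant deformation retraction. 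I would likely prefer the explicit approach: define a retraction of $\Br_V^\Diamond$ onto the subcomplex $\bigcup_{W'} (\text{pieces})$ by the subspace-flow $U \mapsto (U \cap W) \oplus \text{proj}_{W'}(U)$ for an appropriately chosen $W'$, but making this continuous and equivariant across the whole of $\mathcal{S}_W$ is delicate.

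The hard part will be step (2)/(3): controlling how the pieces indexed by different complements $W'$ overlap, and showing the resulting cover of $\Br_V^\Diamond$ has the homotopy type of the wedge (rather than something with extra cells coming from the intersections). A flag can be compatible with many complements simultaneously, so the pieces are far from disjoint; the key point that saves us is that the poset of complements $W'$ compatible with a fixed flag is itself contractible (it is a torsor-like object — an affine space of complements to $W$ inside $U + W$ summed over the flag, roughly a product of affine spaces), so the nerve of the cover collapses. Making this precise — identifying the ``space of compatible complements'' to each simplex and checking it is contractible and varies correctly — is where the real work lies, and I expect it will occupy the bulk of the proof.
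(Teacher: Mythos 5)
Your proposal takes a genuinely different route from the paper, and the route you chose has a real gap that you yourself flag but do not resolve. The paper's proof is much more economical: it observes that both source and target are wedges of copies of $S^n$ (where $n=\dim V$), so the map is an equivalence iff it induces an isomorphism on $H_n$; a dimension count using Proposition~\ref{prop:pnchoose2} shows both $H_n$'s have the same rank $p^{\binom{n}{2}}$, so it suffices to show surjectivity; and via the K\"unneth identification $H_n(\text{source})\cong \Z_{(p)}[P_W](\overline{\Sigma_i\times\Sigma_{n-i}})(\overline{B_i\times B_{n-i}})$ mapping into $\Z_{(p)}[\GL_n]\overline{\Sigma}_n\overline{B}_n$, surjectivity reduces to the Bruhat decomposition $\GL_n = B_n\Sigma_n B_n$. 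No cover of $\Br_V^{\Diamond}$, no nerve theorem, no retraction needs to be built.

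The difficulty with your decomposition approach is more serious than ``delicate.'' The pieces you propose to cover $\Br_V^{\Diamond}$ with --- the images of $\Br_W^{\Diamond}\star\Br_{W'}^{\Diamond}$ for each $W'\in\mathcal{S}_W$ --- are themselves spheres, not contractible, so the nerve theorem in the form you invoke does not apply (a good cover requires the individual pieces, not just the overlaps, to be contractible). Worse, the map from the wedge is far from injective: for instance, take $V=\F_p^2$ and $W$ a line. Then $\Br_V^{\Diamond}$ is a wedge of $p$ circles, each piece $\Br_W^{\Diamond}\star\Br_{W'}^{\Diamond}\simeq S^1$ maps to a $4$-cycle through the vertices $0,W,W',V$, and all $p$ of these $4$-cycles share the length-$2$ path $0$--$W$--$V$. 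Every simplex of $\Br_V^{\Diamond}$ that only involves $W$ (and $0,V$) is ``split'' with respect to \emph{every} complement, so the map from $\bigvee_{W'}S^1$ onto $\Br_V^{\Diamond}$ is $p$-to-$1$ on a contractible spine. Your step (2), ``show the map is injective on the interiors of simplices up to the $P_W$-action,'' is false even in this baby case. What actually needs to be shown is that the common spine is contractible and that collapsing it realizes the wedge decomposition; this is plausible but you would need to produce a genuine argument (e.g.\ a homotopy colimit over the nerve poset with the non-constant diagram $S^1,*,\ldots$, or an explicit collapsing map) and verify it in all higher ranks, where the ``spine'' is considerably more complicated than a single edge-path. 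None of that machinery is necessary once one passes to homology as the paper does. Your observation that the local joins are the right building blocks is correct, and the first paragraph correctly unwinds the product map; the missing ingredient is the reduction to a cellular/homological computation rather than a global point-set cover.
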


As promised, here are the proofs of Propositions 7 through 10.

\setcounter{theorem}{6}
\begin{proposition}\label{prop:pnchoose2}
The space $\Br_n$ has the homotopy type of a wedge of $p^{\binom{n}{2}}$ spheres of dimension $n-2$.
\end{proposition}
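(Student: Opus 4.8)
This is the Solomon--Tits theorem for the Tits building of $\GL_n(\F_p)$, so the plan is to run the classical apartment argument. Write $V=\F_p^n$; the maximal simplices of $\Br_n$ are the complete flags, which I will call \emph{chambers} (they have dimension $n-2$). For a decomposition $V=\ell_1\oplus\cdots\oplus\ell_n$ into lines, the subspaces $\bigoplus_{i\in S}\ell_i$ with $\emptyset\ne S\subsetneq\{1,\dots,n\}$ span a subcomplex --- an \emph{apartment} --- whose poset is the Boolean lattice on $n$ elements with top and bottom deleted, so that it is the barycentric subdivision of $\partial\Delta^{n-1}$ and is homeomorphic to $S^{n-2}$. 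Refining a flag to a full decomposition into lines shows that every chamber lies in an apartment, so $\Br_n$ is the union of its apartments; and the link of a vertex $W$ is the join $\Br_W\star\Br_{V/W}$, consistent with $\Br_n$ being the building of type $A_{n-1}$, on which the standard retractions onto apartments are available. The small cases are immediate: $\Br_1=\emptyset=S^{-1}$ with $p^{\binom12}=1$, and $\Br_2$ is the discrete set of the $p+1$ lines in $\F_p^2$, i.e. $\bigvee_pS^0$ with $p^{\binom22}=p$.

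For $n\ge 3$ the heart of the argument is to build $\Br_n$ up one chamber at a time. Fix a chamber $C_0$ and an apartment $\Sigma_0\ni C_0$, list the chambers as $C_0,C_1,C_2,\dots$ in order of non-decreasing gallery distance from $C_0$ (breaking ties suitably), and let $X_k=\overline{C_0}\cup\cdots\cup\overline{C_k}$, so $X_0\simeq\ast$ and $\bigcup_kX_k=\Br_n$. Using the canonical folding retraction of the building onto an apartment centered at $C_0$, one shows that $\overline{C_k}\cap X_{k-1}$ is the union of those closed panels (codimension-one faces) of $C_k$ across which the distance to $C_0$ strictly decreases. Since $C_k$ is an $(n-2)$-simplex, a union of $j$ of its $n-1$ facets deformation-retracts onto the complementary face and so is contractible when $1\le j\le n-2$, while for $j=n-1$ it is all of $\partial C_k\cong S^{n-3}$; and the case $j=n-1$ occurs exactly when $C_k$ is opposite $C_0$ (the longest element of $S_n$ being the unique element with all $n-1$ simple reflections as right descents). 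When $C_k$ is opposite $C_0$, moreover, $\partial C_k$ bounds a disc inside $X_{k-1}$ --- namely the complement of $C_k$ in the unique apartment through $C_0$ and $C_k$, whose other chambers all lie strictly closer to $C_0$ --- so the attaching map is null. Hence $X_k\simeq X_{k-1}$ unless $C_k$ is opposite $C_0$, in which case $X_k\simeq X_{k-1}\vee S^{n-2}$, and therefore $\Br_n\simeq\bigvee_{C\,\mathrm{opp}\,C_0}S^{n-2}$.

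It remains to count the chambers opposite a fixed $C_0$. Taking $C_0$ to be the standard coordinate flag with Borel subgroup $B_n$, the Bruhat decomposition identifies these with the big cell $B_nw_0B_n/B_n$, of cardinality $p^{\ell(w_0)}=p^{\binom n2}$ because $\ell(w_0)$ is the number of positive roots of $A_{n-1}$; equivalently the unipotent radical $U_n\subset B_n$ acts on them simply transitively with $|U_n|=p^{\binom n2}$. As an independent check one may compute the reduced Euler characteristic: since $\Br_n\simeq\bigvee S^{n-2}$ it equals $(-1)^{n}$ times the number of spheres, while by P.~Hall's theorem it is the Möbius number $\mu(0,V)$ of the subspace lattice, which the recursion $\sum_{k=0}^n\binom{n}{k}_{p}\mu_k=0$ (valid for $n\ge1$, with $\binom{n}{k}_{p}$ the Gaussian binomial) evaluates to $\mu_k=(-1)^kp^{\binom k2}$; hence the number of spheres is $p^{\binom n2}$.

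The step I expect to be the main obstacle is the verification in the second paragraph: that in the distance-ordered build-up $\overline{C_k}$ is glued to $X_{k-1}$ along precisely its ``wall set toward $C_0$'', and that this set is contractible unless $C_k$ is opposite $C_0$. This is the genuine content of Solomon--Tits and relies on the retraction properties of buildings, and a careful treatment must take some care with how ties in gallery distance are ordered (e.g. by using a genuine shelling order). One can shortcut the whole of the second paragraph by invoking Proposition~\ref{proposition:induction} with $W$ a hyperplane: that proposition expresses $\Sigma\Br_n^{\Diamond}$ as a smash of $\Sigma\Br_{n-1}^{\Diamond}$ with a wedge of $p^{n-1}$ copies of $\Sigma S^0$, so that the result follows by induction on $n$ once one notes $\binom{n-1}{2}+(n-1)=\binom n2$ --- provided Proposition~\ref{proposition:induction} has been proved without appeal to the present statement.
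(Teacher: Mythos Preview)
Your Solomon--Tits argument is correct in outline and is the classical proof, but it is genuinely different from the paper's approach and considerably heavier. The paper argues by a short direct induction: fix a hyperplane $H\subset\F_p^n$, let $\Po$ be the subcomplex spanned by subspaces meeting $H$ nontrivially, and observe that the poset map $W\mapsto W\cap H$ contracts $\Po$ (it is $\le$ both the identity and the constant map at $H$). Collapsing $\Po$ leaves only flags whose minimal member is a line $L$ transverse to $H$; the star of such an $L$ modulo its link is $\Sigma\Br_{n-1}$, so $\Br_n\simeq\bigvee_{L\perp H}\Sigma\Br_{n-1}$, and since there are $p^{n-1}$ such lines the induction closes with $\binom{n-1}{2}+(n-1)=\binom{n}{2}$. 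This is essentially the ``shortcut'' you propose at the end, but carried out directly at the level of posets rather than through Proposition~\ref{proposition:induction}; that matters, because in this paper Proposition~\ref{proposition:induction} is proved \emph{using} the present statement (to compare dimensions), so your suggested shortcut would indeed be circular as you feared. What your approach buys is the identification of the spheres with apartments and the count with the big Bruhat cell, which is conceptually pleasant and connects to the later identification $H_{n-2}(\Br_n)\cong\St_n$; what the paper's approach buys is brevity and complete self-containment, with no appeal to gallery distance, retractions onto apartments, or shelling orders.
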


\emph{Note:} An alternate proof is given as (\cite{Ben}, Theorem 6.8.5).
\begin{proof}
We use induction on $n$. The case $n=1$ is obvious. Suppose that $n\ge 2$. Let $H\subset \F_p^n$ be a subspace of dimension $n-1$, and let $\Po\subset \Br_n$ be the nerve of the poset of subspaces which intersect $H$ nontrivially. The space $\Po$ is contractible, because it has a self map $W \mapsto H\cap W$ which is homotopic to both the constant map at $H$ and to the identity map. Therefore, $\Br_n\simeq \Br_n/\Po$. Note that any subspace of $\F_p^n$ of dimension 2 or greater automatically intersects $H$ nontrivially, and so the only simplices which remain in $\Br_n/\Po$ are those flags whose bottom space is a line transverse to $H$. Thus, $\Br_n/\Po$ decomposes as a wedge sum
$$\Br_n/\Po\simeq \bigvee\limits_{L\perp H}(\Br_n)_{\ge L}/(\Br_n)_{>L}$$
where $(\Br_n)_{\ge L}$ (resp. $(\Br_n)_{>L}$) denotes the nerve of the poset of subspaces containing $L$ (resp. strictly containing $L$). The space $(\Br_n)_{\ge L}$ is contractible, and $(\Br_n)_{>L}\simeq \Br_{n-1}$. Thus, $\Br_n/\Po\simeq \bigvee\limits_{L\perp H}\Sigma \Br_{n-1}$. The induction is now complete by the observation that there are $p^{n-1}$ lines transverse to $H$.
\end{proof}
\begin{proposition}\label{prop:flagcomplexproof}
The top homology group $H_{n-2}(\Br_n;\Z_{(p)})$, with its left $\GL_n$--action, is the Steinberg module $\St_n$.
\end{proposition}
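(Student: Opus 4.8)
The plan is to realise the Steinberg module inside the top chain group of $\Br_n$ as the $\GL_n$-orbit of the fundamental cycle of a single apartment, and then to invoke the Solomon--Tits theorem to see that these apartment cycles exhaust $H_{n-2}(\Br_n;\Z_{(p)})$. To set up the chain-level picture I would use the augmented simplicial chain complex of the order complex $\Br_n$: since a flag of proper nonzero subspaces of $\F_p^n$ has at most $n-1$ terms there are no simplices above dimension $n-2$, so $H_{n-2}(\Br_n;\Z_{(p)})=\ker\bigl(\partial\colon C_{n-2}(\Br_n)\to C_{n-3}(\Br_n)\bigr)$ for every $n$ (for $n=2$ one reads $C_{-1}=\Z_{(p)}$, i.e.\ reduced homology, which is what the statement intends). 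The $(n-2)$-simplices are the complete flags, each with the canonical orientation by increasing dimension, and $\GL_n$ permutes them transitively with the stabiliser of the standard flag $F_0=\bigl(\langle e_1\rangle\subset\langle e_1,e_2\rangle\subset\cdots\subset\langle e_1,\dots,e_{n-1}\rangle\bigr)$ equal to the Borel subgroup $B_n$. Hence $gF_0\mapsto g\overline{B}_n$ is an isomorphism of left $R_n$-modules $C_{n-2}(\Br_n)\xrightarrow{\ \sim\ }R_n\overline{B}_n\subseteq R_n$, under which $H_{n-2}(\Br_n;\Z_{(p)})$ becomes a left ideal of $R_n$ contained in $R_n\overline{B}_n$.

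Next I would produce the inclusion $\St_n\subseteq H_{n-2}(\Br_n;\Z_{(p)})$. Let $\mathcal A\subseteq\Br_n$ be the full subcomplex on the coordinate subspaces $\langle e_i:i\in S\rangle$ for $\emptyset\neq S\subsetneq\{1,\dots,n\}$; this is the order complex of the proper part of the Boolean lattice on $\{1,\dots,n\}$, i.e.\ the barycentric subdivision of $\partial\Delta^{n-1}$, so $|\mathcal A|\cong S^{n-2}$. Its complete flags are the $F_\sigma:=\sigma F_0$ for $\sigma\in\Sigma_n$, and the standard computation of the fundamental cycle of a subdivided simplex (which I would carry out explicitly in low dimensions to fix the signs once and for all) gives it as $z_{\mathcal A}=\sum_{\sigma\in\Sigma_n}(-1)^{\sigma}[F_\sigma]$. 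Under the isomorphism above $z_{\mathcal A}\mapsto\sum_{\sigma}(-1)^{\sigma}\sigma\overline{B}_n=\overline{\Sigma}_n\overline{B}_n=c_ne_n$; being the image of a fundamental class of a sphere, $z_{\mathcal A}$ is a cycle, so $e_n$ (using $c_n\in\Z_{(p)}^{\times}$) lies in $\ker\partial$, and applying the $R_n$-action gives $\St_n=R_ne_n\subseteq H_{n-2}(\Br_n;\Z_{(p)})$ inside $R_n$.

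For the reverse inclusion I would invoke the Solomon--Tits theorem: the reduced homology of the building $\Br_n$ is concentrated in degree $n-2$ and $H_{n-2}(\Br_n;\Z_{(p)})$ is spanned over $\Z_{(p)}$ by the fundamental cycles of apartments (alternatively one can feed the splitting $\Br_n\simeq\bigvee_L\Sigma\Br_{n-1}$ from the proof of Proposition~\ref{prop:pnchoose2} into an induction). The apartments of $\Br_n$ are exactly the subcomplexes $\mathcal A_{\mathcal F}$ attached to frames $\mathcal F$ (direct-sum decompositions of $\F_p^n$ into lines); $\GL_n$ acts transitively on frames, and $z_{\mathcal A_{\mathcal F}}=\pm g\,z_{\mathcal A}$ whenever $g$ carries the standard frame to $\mathcal F$. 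Hence $H_{n-2}(\Br_n;\Z_{(p)})\subseteq R_nz_{\mathcal A}=R_ne_n$, which with the previous step yields $H_{n-2}(\Br_n;\Z_{(p)})\cong\St_n$ as left $\Z_{(p)}[\GL_n]$-modules; one could instead simply cite this identification from \cite{CR} or \cite{Ben}.

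The only real obstacle is bookkeeping: tracking orientation signs carefully enough that the apartment cycle comes out to be exactly $\overline{\Sigma}_n\overline{B}_n$ rather than a determinant twist of it, and using the integral form of Solomon--Tits --- generation of the top homology by apartment classes --- rather than a statement valid only rationally. I would also flag that, since the equality $\dim_{\Z_{(p)}}\St_n=p^{\binom{n}{2}}$ is to be deduced from this proposition together with Proposition~\ref{prop:pnchoose2}, that dimension count must not be used as an input here; this is why the Solomon--Tits argument, rather than a rank comparison inside $C_{n-2}(\Br_n)$, is the clean route.
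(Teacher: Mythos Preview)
Your proposal is correct and shares the paper's setup: identify $C_{n-2}(\Br_n)\cong R_n\overline{B}_n$ via maximal flags, recognise the standard apartment cycle as $\overline{\Sigma}_n\overline{B}_n$, and show it is a cycle (the paper does this by pairing boundary faces directly rather than by recognising $|\mathcal A|\cong S^{n-2}$, but this is a cosmetic difference). The genuine divergence is in the spanning step. The paper does not invoke Solomon--Tits; instead it runs an explicit triangularity argument: fix a reference complete flag $\mathcal F$, and for each complete flag $(W_\bullet)$ transverse to $\mathcal F$ construct a specific matrix $m$ (choosing $w_i$ spanning $W_i\cap\mathcal F_{n-i-1}$) so that in $s_m$ the only $\mathcal F$-transverse term is $(W_\bullet)$ itself. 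This produces $p^{\binom{n}{2}}$ linearly independent apartment cycles, and comparison with $\dim H_{n-2}=p^{\binom{n}{2}}$ from Proposition~\ref{prop:pnchoose2} finishes.

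Your worry that ``rank comparison inside $C_{n-2}(\Br_n)$'' would be circular is therefore slightly misplaced: the paper \emph{does} use a rank count, but it inputs $\dim H_{n-2}(\Br_n)=p^{\binom{n}{2}}$ from Proposition~\ref{prop:pnchoose2}, which is proved independently by the wedge-of-spheres argument, not $\dim\St_n$. The equality $\dim\St_n=p^{\binom{n}{2}}$ is then the output, not an input. Your Solomon--Tits route is perfectly valid and cleaner to state, but it imports a theorem whose standard proof is essentially the triangularity/retraction argument the paper writes out (or the inductive splitting you mention as an alternative); what the paper's version buys is self-containment.
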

\begin{proof}
The group of simplicial chains $C_{n-2}(\Br_n)$ is the free $\Z$-module over the set of maximal flags, which is $\Z[\GL_n/B_n]\cong \Z[\GL_n]\overline{B}_n$. Let $m\in\GL_n$ be a matrix, and let its columns be denoted by $v_1, \ldots, v_n$. Then consider the following linear combination of maximal flags
$$s_m=\sum\limits_{\sigma\in\Sigma_n}(-1)^{\sigma}(\langle v_{\sigma(1)}\rangle\subset \langle v_{\sigma(1)}, v_{\sigma(2)}\rangle\subset \cdots )$$
In the module $\Z[\GL_n]\overline{B}_n$, the chain $s_m$ is equal to $m\overline{\Sigma}_n\overline{B}_n$.

We claim that $s_m$ is a cycle, namely, $\partial(s_m)=0$. Let $w_1, \ldots, w_n$ be any permutation of $v_1, \ldots, v_n$. Then any $(n-3)$-simplex of the form $(\cdots\subset \langle w_1, \ldots, w_{i-1}\rangle \subset \langle w_1, \ldots, w_{i+1}\rangle\subset \cdots)$ is on the boundary of exactly the following two different $(k-2)$-simplices.
$$(\cdots\subset \langle w_1, \ldots, w_{i-1}\rangle \subset \langle w_1, \ldots, w_{i-1}, w_i\rangle \subset\langle w_1, \ldots, w_{i+1}\rangle\subset \cdots)$$
$$(\cdots\subset \langle w_1, \ldots, w_{i-1}\rangle \subset \langle w_1, \ldots, w_{i-1}, w_{i+1}\rangle \subset \langle w_1, \ldots, w_{i+1}\rangle\subset \cdots)$$
and this implies that $\partial(s_m)=0$.

We next claim that the set $\{s_m\}_{m\in\GL_n}$ spans $H_{n-2}(\Br_n)$. This will prove the claim that $H_{n-2}(\Br_n)\cong \St_n$. Fix a complete flag $\mathcal{F}=(\mathcal{F}_1\subset\cdots\subset \mathcal{F}_{n-1})$. Suppose that $(W_1\subset\cdots \subset W_{n-1})$ is a complete flag which is \emph{transverse} to $\mathcal{F}$, \emph{i.e.}, $W_i\cap \mathcal{F}_{n-i}=0$ for $i=1, \ldots, n-1$. For each $i=1, \ldots, n$, $W_i\cap \mathcal{F}_{n-i-1}$ is 1-dimensional, and so we may pick a sequence of nonzero vectors $w_1, \ldots, w_n$ so that $\langle w_i\rangle=W_i\cap \mathcal{F}_{n-i-1}$. The $w_i$'s have two important properties.
\begin{itemize}
\item Observe that $w_i\in \mathcal{F}_{n-i-1}$ and therefore $w_i\notin W_{i-1}$. Thus, by induction on $i$, $\langle w_1, \ldots, w_i\rangle=W_i$ for $i=1, 2, \ldots, n-1$, and $\langle w_1, \ldots, w_k\rangle=\F_p^n$.
\item Suppose that $\sigma\in\Sigma_n$ is a permutation such that $\sigma(i)=j$, where $j>i$. Then $w_j\in \mathcal{F}_{n-(j-1)}\implies w_j\in \mathcal{F}_{n-i}$, and because $w_{\sigma(i)}=w_j$, we have $\langle w_{\sigma(1)}, \ldots, w_{\sigma(i)}\rangle\cap \mathcal{F}_{n-i}\neq 0$. Thus, for any nontrivial permutation $\sigma\in\Sigma_n$, the flag $(\langle w_{\sigma(1)}\rangle\subset \langle w_{\sigma(1)}, w_{\sigma(2)}\rangle\subset\cdots)$ is not transverse to $\mathcal{F}$.
\end{itemize}
Let $m$ be the matrix whose columns are $w_1, \ldots, w_n$. The two properties above imply that
$$s_m=(W_1\subset \cdots \subset W_{n-1})+\sum\limits_{\sigma\neq \mathrm{id}}(-1)^{\sigma}(\mathrm{non}-\mathcal{F}-\mathrm{transverse}\;\mathrm{flags})$$
It follows that the dimension of the span of the set $\{s_m\}_{m\in\GL_kn}$ is at least as large as the number of complete flags transverse to $\mathcal{F}$, which is $p^{\binom{n}{2}}$. This is the dimension of the entire space $H_{n-2}(\Br_n)$, so the two are equal, as desired.
\end{proof}


\begin{proposition}\label{prop:stabletransfer}
There are maps
$$\Sigma^{1-n}\Br_n^{\Diamond} \rightarrow \Sigma_+^{\infty}\GL_n \hskip 1in \Sigma_+^{\infty}\GL_n \rightarrow \Sigma^{1-n}\Br_n^{\Diamond}$$
such that the composition $\Sigma^{1-n}\Br_n^{\Diamond} \rightarrow \Sigma_+^{\infty}\GL_n \rightarrow \Sigma^{1-n}\Br_n^{\Diamond}$ is multiplication by a unit in $\Z_{(p)}$.
\end{proposition}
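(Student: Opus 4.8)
The plan is to produce the two maps as $\GL_n$-equivariant maps (the stated non-equivariant version then follows by forgetting the action), working $p$-locally as in the rest of this section; in fact the composite will be homotopic to the identity. Two facts are needed. First, by Proposition~\ref{prop:pnchoose2} the spectrum $\Sigma^{1-n}\Br_n^{\Diamond}$ is $p$-locally a finite wedge of copies of the sphere spectrum $\Sph$; hence it is connective with $\pi_0(\Sigma^{1-n}\Br_n^{\Diamond}) = H_0(\Sigma^{1-n}\Br_n^{\Diamond}) = \St_n$ as an $R_n$-module (Proposition~\ref{prop:flagcomplexproof}), the group $[\Sigma^{1-n}\Br_n^{\Diamond}, \Sph]$ is identified with $\mathrm{Hom}_{\Z_{(p)}}(\St_n, \Z_{(p)})$, and more generally a self-map of $\Sigma^{1-n}\Br_n^{\Diamond}$ is determined up to homotopy by its effect on $\pi_0$. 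Second, because $\GL_n$ is a finite group, $\Sigma^{\infty}_+\GL_n$ is simultaneously the induced and the coinduced sphere, $\mathrm{ind}_e^{\GL_n}\Sph \simeq \Sigma^{\infty}_+\GL_n \simeq \mathrm{coind}_e^{\GL_n}\Sph$ (the Wirthm\"{u}ller isomorphism, i.e.\ the self-duality of the free $\GL_n$-orbit). Consequently a $\GL_n$-equivariant map $\Sigma^{\infty}_+\GL_n \to Z$ is the same datum as a homotopy class of map $\Sph \to Z$, and a $\GL_n$-equivariant map $Y \to \Sigma^{\infty}_+\GL_n$ is the same datum as a homotopy class of (non-equivariant) map $Y \to \Sph$.

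Using the first adjunction, let $\alpha\colon \Sigma^{\infty}_+\GL_n \to \Sigma^{1-n}\Br_n^{\Diamond}$ be the $\GL_n$-map classified by the element $e_n \in \St_n = \pi_0(\Sigma^{1-n}\Br_n^{\Diamond})$; on $\pi_0$ it is the $R_n$-linear surjection $R_n \to R_n e_n = \St_n$, $r \mapsto r e_n$. Using the second adjunction, let $\beta\colon \Sigma^{1-n}\Br_n^{\Diamond} \to \Sigma^{\infty}_+\GL_n$ be the $\GL_n$-map whose underlying non-equivariant map $\Sigma^{1-n}\Br_n^{\Diamond} \to \Sph$ realizes the $\Z_{(p)}$-linear functional ``coefficient of the identity matrix'' on $\St_n \subseteq R_n$; this map of spectra exists and is unique up to homotopy because $[\Sigma^{1-n}\Br_n^{\Diamond}, \Sph] = \mathrm{Hom}_{\Z_{(p)}}(\St_n, \Z_{(p)})$. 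Unwinding the adjunction, the coefficient of $g \in \GL_n$ in $\beta_*(v)$ equals the coefficient of the identity in $g^{-1}v$, which is the coefficient of $g$ in $v$; thus $\beta_*\colon \St_n \to R_n$ is simply the inclusion $\St_n = R_n e_n \hookrightarrow R_n$.

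Composing in the order of the statement, the self-map $(\alpha \circ \beta)_*$ of $\St_n$ sends $r e_n \mapsto r e_n \mapsto r e_n e_n = r e_n$ and so is the identity; since $\Sigma^{1-n}\Br_n^{\Diamond}$ is a finite wedge of $p$-local spheres, a self-map is determined up to homotopy by its effect on $\pi_0$, so $\alpha \circ \beta$ is homotopic to the identity, i.e.\ multiplication by the unit $1 \in \Z_{(p)}^{\times}$. (For orientation, the opposite composite $\beta \circ \alpha$ induces on $\pi_0$ the map $r \mapsto r e_n$, which is precisely the Steinberg idempotent, consistent with Definition~\ref{def:Steinbergsummand}.) The step I expect to require the most care is the explicit identification of $\beta_*$: one must check that the adjoint of the chosen functional is, on $\pi_0$, the inclusion $\St_n \hookrightarrow R_n$, and that this functional is matched correctly to the class $e_n$ defining $\alpha$ so that the composite lands on a unit rather than a mere idempotent. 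By contrast, the equivariant input is routine: everything is happening in naive (Borel) $\GL_n$-spectra, and the only nonformal ingredient there is the coincidence of induction and coinduction along $e \hookrightarrow \GL_n$ for the finite group $\GL_n$.
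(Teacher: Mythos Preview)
Your argument is correct and rather clean, but it proceeds quite differently from the paper's proof. You exploit that $\Sigma^{1-n}\Br_n^{\Diamond}$ is a finite wedge of sphere spectra (Propositions~\ref{prop:pnchoose2} and~\ref{prop:flagcomplexproof}) to reduce everything to algebra on $\pi_0$, and then use the Frobenius reciprocity adjunctions for $\Sigma^{\infty}_+\GL_n$ to realize the left $R_n$--module inclusion $\St_n=R_ne_n\hookrightarrow R_n$ and the projection $r\mapsto re_n$ as equivariant stable maps; the composite is then literally the identity. The paper instead builds both maps geometrically: the map $\Br_n^{\Diamond}\to(\GL_n)_+\sm S^{n-1}$ is the collapse onto the top cells $\Br^{(n-1)}/\Br^{(n-2)}\simeq(\GL_n/B_n)_+\sm S^{n-1}$ followed by the stable transfer $(\GL_n/B_n)_+\to(\GL_n)_+$, and the reverse map is right multiplication by $\overline{\Sigma}_n$ on $(\GL_n)_+$, passage to $(\GL_n/B_n)_+$, and an obstruction-theoretic lift back to $\Br^{(n-1)}$ using that the image consists of cycles. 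The composite there is computed to be multiplication by $c_n\in\Z_{(p)}^{\times}$, not $1$.

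What each buys: your route is shorter and makes the $\GL_n$--equivariance transparent from the start (which is what is actually needed for the splitting $\Sigma^{\infty}X\simeq e_nX\vee(1-e_n)X$), at the cost of invoking the Wirthm\"uller-type self-duality of $\Sigma^{\infty}_+\GL_n$ and the fact that maps between finite wedges of sphere spectra are detected on $\pi_0$. The paper's route is more explicit---one sees the maps at the chain level and the constant $c_n$ emerges naturally from the Steinberg relation $\overline{\Sigma}_n\overline{B}_n\overline{\Sigma}_n\overline{B}_n=c_n\overline{\Sigma}_n\overline{B}_n$---but the equivariance of the lifted map is left implicit. Your flagged ``care point'' (matching the functional defining $\beta$ with the class $e_n$ defining $\alpha$) is exactly right, and your unwinding of the coinduction adjoint to get $\beta_*$ equal to the inclusion is correct.
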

\begin{proof}
For ease of notation, let us write $\Br=\Br_n^{\Diamond}$ in this proof. $\Br$ is a CW-complex whose $i$-cells correspond to flags of proper subspaces $(V_1\subsetneq \cdots \subsetneq V_i)$. $\Br$ has a skeletal filtration
$$\Br^{(0)} \subseteq \Br^{(1)} \subseteq \cdots \subseteq \Br^{(n-1)}=\Br$$
where $\Br^{(i)}$ contains the cells of dimension $i$ and lower.

Then $\Br^{(n-1)}/\Br^{(n-2)}$ is a wedge of $(n-1)$-spheres with a single sphere for every maximal flag. Thus,
$$\xymatrix{\Br=\Br^{(n-1)} \ar[r] & \Br^{(n-1)}/\Br^{(n-2)} \simeq (\GL_n/B_n)_+\sm S^{n-1}}$$
One may now compose with the stable transfer map $(\GL_n/B_n)_+\rightarrow (\GL_n)_+$ to obtain the map $\Br \rightarrow (\GL_n)_+\sm S^{n-1}$. On homology, this composite has the effect of sending $s_m\in \tilde{H}_{n-1}(\Br)$ to $m\overline{\Sigma}_n\overline{B}_n\in \tilde{H}_{n-1}((\GL_n)_+\sm S^{n-1})$

Now, we construct the map $(\GL_n)_+\sm S^{n-1}\rightarrow \Br$. Consider the composition
$$\xymatrix{(\GL_n)_+\sm S^{n-1}\ar[r]^{(-)\cdot \overline{\Sigma}_n} & (\GL_n)_+\sm S^{n-1}\ar[r] & (\GL_n/B_n)\sm S^{n-1}\simeq \Br^{(n-1)}/\Br^{(n-2)}}$$
For any matrix $A\in \GL_k$, the homology class of $(\GL_n)_+\sm S^{n-1}$ defined by $A$ maps under the above composition to the class $s_A$, which is a cycle. Therefore, there is a lift to $\Br^{(n-1)}$,
$$\xymatrix{& & \Br^{(n-1)}\ar[d]\\
(\GL_n)_+\sm S^{n-1}\ar@/_3ex/[drr]_{0}\ar@{-->}@/^3ex/[urr]\ar[rr]^{(-)\overline{\Sigma}_n\overline{B}_n} && \Br^{(n-1)}/\Br^{(n-2)}\ar[d]^{\partial}\\
& & \Sigma \Br^{(n-2)}}$$

The map above sends a matrix $A\in \tilde{H}_{n-1}((\GL_n)_+\sm S^{n-1})$ to $A\overline{\Sigma}_n\overline{B}_n\in \tilde{H}_{n-1}(\Br^{(n-1)}/\Br^{(n-2)})$, which lifts to the homology class $s_A\in\tilde{H}_{n-1}(\Br)$.

Therefore, the composite of the two maps we have constructed has the following effect in homology
$$\xymatrix{\tilde{H}_{n-1}(\Br)\ar[r] & \tilde{H}_{n-1}((\GL_n)_+\sm S^{n-1})\ar[r] & \tilde{H}_{n-1}(\Br^{(n-1)}/\Br^{(n-2)})}$$
$$s_A \mapsto A\overline{\Sigma}_n\overline{B}_n \mapsto A\overline{\Sigma}_n\overline{B}_n\overline{\Sigma}_n\overline{B}_n$$
Since $\overline{\Sigma}_n\overline{B}_n\overline{\Sigma}_n\overline{B}_n=c_n\overline{\Sigma}_n\overline{B}_n$ and $c_n\in\Z_{(p)}^{\times}$, the proposition has been proved.
\end{proof}


\begin{proposition}
Let $V$ be a finite dimensional $\F_p$-vector space, and let $W$ be a subspace. Let $P_W\subset \GL(V)$ denote the parabolic subgroup of matrices preserving $W$. Let $\mathcal{S}_W$ denote the $P_W$-set
$$\mathcal{S}_W=\{W'\subset V: W+W'=V \; \text{and} \; W\cap W'=0\}.$$
Then the $P_W$-equivariant product map
$$\Sigma\Br_W^{\Diamond} \sm \bigvee\limits_{W'\in\mathcal{S}_W}\Sigma\Br_{W'}^{\Diamond} \rightarrow \Sigma\Br_V^{\Diamond}$$
is a homotopy equivalence of pointed spaces. The wedge sum shown is taken over all subspaces $W'$ complementary to $W$. (i.e. such that $W+W'=V$ and $W\cap W'=0$)
\end{proposition}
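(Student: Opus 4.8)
The plan is to reduce the statement to an isomorphism in top homology and then to a spanning question for the Steinberg module, which can be settled by re-running the proof of Proposition~\ref{prop:flagcomplexproof} with a suitably chosen flag. Write $n=\dim V$ and $k=\dim W$; since $W$ is a proper nonzero subspace, $1\le k\le n-1$ and in particular $n\ge 2$. The map is $P_W$-equivariant by construction, since the product on flag complexes is natural and $P_W$ both preserves $W$ and permutes the complements $W'\in\mathcal S_W$, so the content is that it is a homotopy equivalence of underlying pointed spaces. By Proposition~\ref{prop:pnchoose2} the spaces $\Br_W^{\Diamond}$, $\Br_{W'}^{\Diamond}$, $\Br_V^{\Diamond}$ are wedges of spheres of dimensions $k-1$, $n-k-1$, $n-1$; hence both the source and the target of our map are wedges of $n$-spheres, and both are simply connected since $n\ge 2$. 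By the Hurewicz and Whitehead theorems it then suffices to show the map induces an isomorphism on $\tilde H_n(-;\Z_{(p)})$. Counting: the source is a wedge of $|\mathcal S_W|\cdot p^{\binom k2}\cdot p^{\binom{n-k}2}$ spheres, and since the number of complements to $W$ in $V$ is $|\mathcal S_W|=p^{k(n-k)}$ and $\binom k2+\binom{n-k}2+k(n-k)=\binom n2$, this equals $p^{\binom n2}$, the number of spheres in $\Br_V^{\Diamond}$. So $\tilde H_n$ of source and target are free $\Z_{(p)}$-modules of the same finite rank, and it is enough to prove surjectivity on $\tilde H_n$.

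The next step is to identify the map on $\tilde H_n$. Using the suspension isomorphism and the homology of the flag complexes (Proposition~\ref{prop:flagcomplexproof}), together with the Künneth theorem --- which has no Tor term because Steinberg modules are $\Z_{(p)}$-free --- we get $\tilde H_n(\Sigma\Br_W^{\Diamond}\sm\Sigma\Br_{W'}^{\Diamond})\cong \St_W\otimes_{\Z_{(p)}}\St_{W'}$ and $\tilde H_n(\Sigma\Br_V^{\Diamond})\cong\St_V$, and the restriction of our map to the $W'$-summand is the Steinberg product $\St_W\otimes\St_{W'}\to\St_V$, as noted when the product on flag complexes was constructed. On the cycle representatives $s_m$ of the proof of Proposition~\ref{prop:flagcomplexproof} this reads: given a basis $(u_1,\dots,u_k)$ of $W$ and a basis $(u_{k+1},\dots,u_n)$ of $W'$, the corresponding generators of $\St_W$ and $\St_{W'}$ are carried to $\pm s_m$, where $m$ is the matrix with columns $u_1,\dots,u_n$. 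Summing over $W'\in\mathcal S_W$, the image of the full map on $\tilde H_n\cong\St_V$ is therefore the $\Z_{(p)}$-span of
$$\bigl\{\, s_m \;:\; m\in\GL(V),\ \langle m_1,\dots,m_k\rangle=W \,\bigr\},$$
where $m_i$ denotes the $i$-th column of $m$ (for such an $m$ the last $n-k$ columns automatically span a complement of $W$).

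It remains to show this span is all of $\St_V$, and this is the crux. Here I would rerun the argument in the proof of Proposition~\ref{prop:flagcomplexproof} with a complete flag $\mathcal F=(\mathcal F_1\subset\cdots\subset\mathcal F_{n-1})$ chosen so that $\mathcal F_k=W$. That argument shows $\St_V$ is spanned by the cycles $s_m$ as $m$ runs over matrices whose column flag is transverse to $\mathcal F$; and for such an $m$ the columns $w_1,\dots,w_n$ form a basis of $V$ adapted simultaneously to the transverse flag and to $\mathcal F$, with $\langle w_{n-i+1},\dots,w_n\rangle=\mathcal F_i$ for all $i$ --- in particular $\langle w_{n-k+1},\dots,w_n\rangle=\mathcal F_k=W$. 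Permuting the columns of $m$ to move $w_{n-k+1},\dots,w_n$ to the front produces a matrix $m'$ with $\langle m'_1,\dots,m'_k\rangle=W$ and $s_{m'}=\pm s_m$, since $s_m$ is alternating in the columns of $m$. Hence every element of a spanning set of $\St_V$ lies in the image, so the map is surjective on $\tilde H_n$, hence an isomorphism, and hence a homotopy equivalence. The part I expect to need the most care is the homology identification in the previous paragraph: matching the geometric product on flag complexes with the algebraic Steinberg product, pinning the image down to exactly the span displayed, and keeping track of the sign under column permutations that makes the final step with the flag through $W$ work.
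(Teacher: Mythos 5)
Your reduction is the same as the paper's: both arguments note that source and target are wedges of $n$-spheres, count that both have rank $p^{\binom{n}{2}}$ in top $\Z_{(p)}$-homology (using Proposition~\ref{prop:pnchoose2} and $\binom{k}{2}+k(n-k)+\binom{n-k}{2}=\binom{n}{2}$), and conclude it suffices to check surjectivity on $\tilde H_n$. Your identification of the image as the span of $\{s_m : \langle m_1,\dots,m_k\rangle = W\}$ agrees with the paper's --- after putting $W$ in standard position, $P\in P_W$ is exactly the condition that the first $k$ columns of $P$ span $W$, so this is the same as $\Z_{(p)}[P_W]\cdot\overline{\Sigma}_n\overline{B}_n$. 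Where you genuinely diverge is the final surjectivity step: the paper invokes the Bruhat decomposition $\GL_n=B_n\Sigma_nB_n$ (``any invertible matrix can be written $a\sigma b$''), while you re-run the transverse-flag spanning argument from Proposition~\ref{prop:flagcomplexproof} with the auxiliary flag $\mathcal F$ chosen so that $\mathcal F_k=W$, and then exploit the fact that $s_m$ is alternating in the columns of $m$ to permute $w_{n-k+1},\dots,w_n$ to the front. Both rest on the same underlying linear algebra (row reduction / transversality), so this is a variation in packaging rather than a fundamentally different method; your version is more geometric and more self-contained relative to what has already been proved in the paper, while the paper's is a shorter appeal to a standard algebraic fact. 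The step you flag as needing the most care --- matching the geometric join product with the algebraic Steinberg product, with signs and constants --- is indeed the fiddliest point, but since all constants that appear (the $c_i$'s) are units in $\Z_{(p)}$, the image as a $\Z_{(p)}$-submodule is unaffected, and the argument goes through.
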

\begin{proof}
Let $V$ have dimension $n$. Both $\Sigma\Br_W^{\Diamond}\sm \bigvee\limits_{W'\perp W}\Sigma\Br_{W'}^{\Diamond}$ and $\Sigma\Br_V^{\Diamond}$ have underlying space equivalent to a wedge of copies of $S^n$, so it suffices to prove that the map is an equivalence on $n$-th homology groups. Without loss of generality, assume $V\simeq\F_p^n$, and $W\simeq \F_p^i$ is spanned by the first $i$ basis vectors. The $P_W$-set of subspaces $W'$ which are transverse to $W$ is equivalent to $P_W/(\GL_i\times \GL_{n-i})$. This set has size $p^{i(n-i)}$, and so by Proposition \ref{prop:pnchoose2},
$$\begin{aligned}
\mathrm{dim}(H_n(\Sigma \Br_W^{\Diamond}\sm \bigvee\limits_{W'\perp W}\Sigma\Br_{W'}^{\Diamond}))&=p^{\binom{i}{2}+i(n-i)+\binom{n-i}{2}}\\
&=p^{\binom{n}{2}}=\mathrm{dim}(H_n(\Sigma\Br_V^{\Diamond}))
\end{aligned}$$
So, by dimension reasons, it suffices to show that the given map is a surjection on homology. Recall that, for any $j$, the top $\Z_{(p)}$-homology group of $\Br_j^{\Diamond}$ is $\Z_{(p)}[\GL_j]\overline{\Sigma}_j\overline{B}_j$. Therefore, by the Kunneth formula,
$$\begin{aligned}
H_n(\Sigma \Br_W^{\Diamond}\sm \bigvee\limits_{W'\perp W}\Sigma\Br_{W'}^{\Diamond})&\simeq \mathrm{Ind}_{\GL_i\times \GL_{n-i}}^{P_W}\F_{(p)}[\GL_i\times \GL_{n-i}](\overline{\Sigma_i\times \Sigma_{n-i}})(\overline{B_i\times B_{n-i}})\\
&\simeq \Z_{(p)}[P_W](\overline{\Sigma_i\times \Sigma_{n-i}})(\overline{B_i\times B_{n-i}})
\end{aligned}$$
The map is given by the inclusion $P_W \rightarrow \GL_n$. Therefore, in order to show that the map
$$\xymatrix{\Z_{(p)}[P_W](\overline{\Sigma_i\times \Sigma_{n-i}})(\overline{B_i\times B_{n-i}})\ar[r] & \Z_{(p)}[\GL_n]\overline{\Sigma}_n\overline{B}_n}$$
is surjective, it is sufficient to show that any invertible $n\times n$ matrix can be written in the form $a \sigma b$, where $a, b\in B_n$ and $\sigma \in\Sigma_n$. This can be shown easily by row reduction.
\end{proof}

\section{Fixed points of a Steinberg summand}
\label{sec:commutingfunctors}
The Steinberg summand construction (Definition \ref{definition:steinbergsummand}) may be carried into the equivariant setting. First, we must define the $G$-equivariant analogue of homotopy orbits.
\begin{mydef}\label{def:eqclassifyingspace}
If $\Lambda$ is any finite group, then $E_G\Lambda$ denotes the $(G\times \Lambda)$-space whose fixed points under any subgroup $\Gamma\subset G\times \Lambda$ are
$$(E_G\Lambda)^{\Gamma}\simeq \begin{cases}
\star \; \mathrm{if} \; \Gamma\cap \Lambda = 1\\
\emptyset \; \mathrm{if} \; \Gamma\cap \Lambda \neq 1
\end{cases},$$
and $B_G\Lambda$ is the quotient $G$-space $(E_G\Lambda)/\Lambda$.
\end{mydef}
Note that the $G$-equivariant classifying spaces $B_G\Lambda$ fit into a theory of equivariant principal $\Lambda$-bundles (\cite{GMM}).
\begin{mydef}\label{def:eqSteinbergsummand}
Let $G$ be a finite group, and let $X$ be a spectrum with $(G\times \GL_n)$-action. The Steinberg summand $e_nX$ is the na{\"i}ve $G$-spectrum
$$e_nX = (\Sigma^{1-n}\Br_n^{\Diamond}\sm X)\sm_{\GL_n} (E_G\GL_n)_+.$$
\end{mydef}
It follows from Proposition \ref{prop:stabletransfer} that the na{\"i}ve $G$-spectrum $X$ contains $e_nX$ as a summand. Taking $G$-fixed points of the $(G\times \Lambda)$-space $E_G\Lambda$, yields the $\Lambda$-space $E\Lambda$. Thus we have an inclusion of $\Lambda$-spaces
$$E\Lambda \simeq (E_G\Lambda)^G \hookrightarrow E_G\Lambda.$$

This inclusion produces, for every subgroup $H\subseteq G$, a natural transformation from the composite functor $e_n((-)^H)$ to the composite functor $(e_n(-))^H$.
$$\xymatrix{\Top_*^{G\times \GL_n}\ar[r]^{(-)^H}\ar[d]_{e_n(-)} & \Top_*^{\GL_n}\ar[d]^{e_n(-)}\ar@{=>}[dl]\\
\text{Na{\"i}ve G-spectra}\ar[r]_{\hskip 0.2in (-)^H} & \text{Spectra}}$$
In this section, we prove that the natural transformation above is a homotopy equivalence when $G$ is a $p$-group. That is, we prove the following theorem.
\begin{theorem}\label{thm:commutingfunctors}
Let $G$ be a $p$-group, and let $H\subseteq G$ be any subgroup. Let $Y$ be any pointed $(G\times \GL_n)$-space. The inclusion of fixed points $E\GL_n\simeq (E_G\GL_n)^G \hookrightarrow E_G\GL_n$ induces a map
$$(\Br_n^{\Diamond}\sm Y^H)\sm_{\GL_n} (E\GL_n)_+ \rightarrow ((\Br_n^{\Diamond}\sm Y)\sm_{\GL_n} (E_G\GL_n)_+)^H.$$
If $G$ is a $p$-group, then the map above is an equivalence. It immediately follows that the map $e_n(Y^H) \rightarrow (e_nY)^H$ is an equivalence of spectra.
\end{theorem}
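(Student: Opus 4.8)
The plan is to reduce the statement to a question about $G$-fixed points of the homotopy orbit space $(\Br_n^{\Diamond}\sm Y)\sm_{\GL_n}(E_G\GL_n)_+$, analyzed via the $G$-action on $E_G\GL_n$. The key observation is that $G$ acts on $E_G\GL_n$ with isotropy that intersects $\GL_n$ trivially (by the defining property in Definition \ref{def:eqclassifyingspace}), so a point of $E_G\GL_n$ fixed by a subgroup $\Gamma\subseteq G\times\GL_n$ exists only when $\Gamma\cap\GL_n=1$, i.e. $\Gamma$ is the graph of a homomorphism $\varphi\colon K\to\GL_n$ for some subgroup $K\subseteq G$. For a $(G\times\GL_n)$-space $Z$, a general principle (the ``tom Dieck splitting'' style analysis of fixed points of a Borel construction, or more elementarily a direct cell-by-cell inspection of $Z\sm_{\GL_n}(E_G\GL_n)_+$) identifies the $H$-fixed points with a wedge, indexed by $\GL_n$-conjugacy classes of homomorphisms $\varphi\colon H\to\GL_n$, of the spaces $(Z^{\Gamma_\varphi})\sm_{C(\varphi)}(E C(\varphi))_+$, where $\Gamma_\varphi$ is the graph and $C(\varphi)$ is the centralizer of $\im\varphi$ in $\GL_n$. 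The term $\varphi=1$ (the trivial homomorphism) contributes exactly $(Z^H)\sm_{\GL_n}(E\GL_n)_+$, which is the source of our map, and the map in the theorem is the inclusion of this summand.

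Applying this with $Z=\Br_n^{\Diamond}\sm Y$ and using the fact that smashing commutes with the $\Gamma_\varphi$-fixed-point functor appropriately, the $H$-fixed points become a wedge over conjugacy classes of $\varphi\colon H\to\GL_n$ of $\bigl((\Br_n^{\Diamond})^{\im\varphi}\sm Y^{\Gamma_\varphi}\bigr)\sm_{C(\varphi)}(E C(\varphi))_+$. So the theorem is equivalent to the claim that every summand with $\varphi$ nontrivial is contractible, and this will follow if $(\Br_n^{\Diamond})^{\im\varphi}$ is contractible (as a $C(\varphi)$-space, or at least nonequivariantly, after checking the Borel construction argument) whenever $\varphi\colon H\to\GL_n$ is a nontrivial homomorphism from a $p$-group. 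This is exactly the point where the hypothesis that $G$ (hence $H$) is a $p$-group is used: a nontrivial $p$-subgroup $\pi=\im\varphi\subseteq\GL_n(\F_p)$ is a $p$-group acting on the $\F_p$-vector space $\F_p^n$, so it fixes a nonzero proper subspace — indeed by a standard argument the fixed points $(\F_p^n)^\pi$ are a nonzero proper subspace when $\pi\neq 1$ — and therefore the poset of proper nonzero subspaces of $\F_p^n$ fixed by $\pi$ has a minimal (or one can arrange a maximal) element, making its nerve contractible; passing to the unreduced suspension $\Br_n^{\Diamond}$ and checking that the relevant fixed subcomplex is still contractible finishes this step.

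The main technical obstacle is the first reduction: making precise and correct the decomposition of the $H$-fixed points of a $\GL_n$-Borel construction built over $E_G\GL_n$ rather than $E\GL_n$, and in particular verifying that the $\varphi=1$ summand is split off by precisely the map induced by $(E_G\GL_n)^G\hookrightarrow E_G\GL_n$. I would handle this by giving $E_G\GL_n$ an explicit $(G\times\GL_n)$-CW model (for instance a bar-type construction or the standard model from \cite{GMM}) and inspecting fixed points skeleton by skeleton; the identification of $(E_G\GL_n)^{\Gamma_\varphi}$ with a model for $E C(\varphi)$ with its natural $C(\varphi)$-action is the crux. Once that structural input is in place, the contractibility of $(\Br_n^{\Diamond})^{\pi}$ for $1\neq\pi$ a $p$-group is the ``soft'' part and follows the pattern of Proposition \ref{prop:pnchoose2}: exhibit a $\pi$-equivariant poset self-map of the fixed subposet that is homotopic to a constant and to the identity, using a $\pi$-fixed nonzero proper subspace. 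The last sentence of the theorem — that the space-level equivalence upgrades to the spectrum-level statement $e_n(Y^H)\xrightarrow{\sim}(e_nY)^H$ — is then immediate upon desuspending by $S^{1-n}$ and smashing with $Y$, since $(-)^H$ on naïve $G$-spectra commutes with the relevant (co)limits and with smashing by the fixed object $S^{1-n}$.
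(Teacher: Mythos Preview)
Your plan is essentially the paper's own proof: decompose the $H$-fixed points of the equivariant Borel construction as a wedge over conjugacy classes of homomorphisms $\varphi\colon H\to\GL_n$ (this is exactly Equation~\ref{eq:fixedpointsformula}), identify the $\varphi=1$ summand with the source of the map, and kill the remaining summands by showing $(\Br_n)^{\im\varphi}$ is contractible for a nontrivial $p$-subgroup $\im\varphi$ via the self-map $W\mapsto W\cap V'$ with $V'=(\F_p^n)^{\im\varphi}$ --- which is precisely Lemma~\ref{lemma:unipotence}. One small correction: the $\pi$-fixed subposet generally does \emph{not} have a minimal (or maximal) element, so that sentence is false as written; the argument that actually works is the conical contraction $W\mapsto W\cap V'$ you describe afterward, and you should note (as the paper does) that $V'$ is invariant under $C_{\GL_n}(\im\varphi)$, so this contraction is $C(\varphi)$-equivariant rather than merely ``$\pi$-equivariant'' (which is vacuous on the fixed subposet).
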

To prove this theorem, we must first establish a formula (Equation \ref{eq:fixedpointsformula}) for the fixed points of the equivariant homotopy orbits of a space.

\begin{mydef}
Let $G$ and $\Lambda$ be any finite groups, and let $H\subseteq G$ be a subgroup. For any homomorphism $f:H \rightarrow \Lambda$, its \emph{graph} is the subgroup of $H\times \Lambda$
$$\Gamma_f:=\{(h,f(h))\; :\; h\in H\}.$$
The group $\Lambda$ acts on the set $\mathrm{Hom}(H,\Lambda)$ by conjugation on the target. For a homomorphism $f:H \rightarrow \Lambda$, let $C_{\Lambda}(\im f)\subseteq \Lambda$ denote the centralizer of the image of $f$. Note that if $f, f'\in \mathrm{Hom}(H,\Lambda)$ are conjugate homomorphisms, then the centralizers $C_{\Lambda}(\im f)$ and $C_{\Lambda}(\im f')$ are conjugate subgroups.
\end{mydef}
Notice that if $f, f'\in \mathrm{Hom}(H, \Lambda)$ are two different homomorphisms, then the subgroup of $H\times \Lambda$ generated by $\langle \Gamma_f, \Gamma_{f'}\rangle$ is no longer a graph homomorphisms. It follows that $(E_G\Lambda)^{\Gamma_f}\cap (E_G\Lambda)^{\Gamma_{f'}}=\emptyset$.  For any $(G\times \Lambda)$-space $X$, we therefore obtain the following formula for the $H$-fixed points of $X\times_{\Lambda}E_G\Lambda$.
\begin{equation}\label{eq:fixedpointsformula}
\begin{aligned}(X\times_{\Lambda}E_G\Lambda)^H &= \left(\coprod\limits_{f\in \mathrm{Hom}(H,\Lambda)}X^{\Gamma_f}\times (E_G\Lambda)^{\Gamma_f}\right)/\Lambda\\
&= \left(\coprod\limits_{f\in \mathrm{Hom}(H,\Lambda)}X^{\Gamma_f}\right)\times_{\Lambda}E\Lambda\\
&\simeq \coprod\limits_{[f]\in \mathrm{Hom}(H,\Lambda)/\Lambda}(X^{\Gamma_f})_{hC_{\Lambda}(\im f)}.
\end{aligned}
\end{equation}
The map $E\Lambda=(E_G\Lambda)^G \hookrightarrow E_G\Lambda$ of $\Lambda$-spaces yields an inclusion map
$$X^H\times_{\Lambda}E\Lambda=(X\times_{\Lambda}E\Lambda)^H \hookrightarrow (X\times_{\Lambda}E_G\Lambda)^H.$$
Under the decomposition of Equation \ref{eq:fixedpointsformula}, the space $X^H\times_{\Lambda}E\Lambda$ is the summand corresponding to the zero homomorphism $H \rightarrow \Lambda$.

\begin{proof}[Proof of Theorem \ref{thm:commutingfunctors}]
The fixed point space $((\Br_n^{\Diamond}\sm Y)\sm_{\GL_n} (E_G\GL_n)_+)^H$ decomposes as a wedge sum
$$((\Br_n^{\Diamond}\sm Y)\sm_{\GL_n} (E_G\GL_n)_+)^H=\bigvee\limits_{\mathrm{Hom}(H,\GL_n)/\GL_n}((\Br_n^{\Diamond})^{\im f}\sm Y^{\Gamma_f})_{C_{\GL_n}(\im f)}.$$
We must prove that for every nontrivial homomorphism $f$, up to conjugacy, the summand $((\Br_n^{\Diamond})^{\im f}\sm Y^{\Gamma_f})_{C_{\GL_n}(\im f)}$ is contractible. It is sufficient to prove that the pointed $C_{\GL_n}(\im f)$-space $(\Br_n^{\Diamond})^{\im f}$ is equivariantly contractible. This will follow from a proof that the unpointed $C_{\GL_n}(\im f)$-space $(\Br_n)^{\im f}$ is equivariantly contractible, which follows from Lemma \ref{lemma:unipotence} below.

The map of spectra $e_n(Y^H) \rightarrow (e_nY)^H$ is the $(n-1)$-th desuspension of the inclusion $((\Br_n^{\Diamond}\sm Y)\sm_{\GL_n} (E\GL_n)_+)^H \hookrightarrow ((\Br_n^{\Diamond}\sm Y)\sm_{\GL_n} (E_G\GL_n)_+)^H$, and is therefore an equivalence.
\end{proof}
\begin{lemma}\label{lemma:unipotence}
Let $V$ be a finite dimensional vector space over a finite field $\F$ of positive characteristic $p$. Let $U\subset \GL(V)$ be a nontrivial unipotent subgroup (i.e. order a power of $p$). The fixed point space $(\Br_V)^U$ carries a residual action of the normalizer of $U$, which we denote by $N_{\GL(V)}(U)$. Then $(\Br_V)^U$ is $N_{\GL(V)}(U)$-equivariantly contractible.
\end{lemma}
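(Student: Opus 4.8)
The plan is to exhibit an explicit $N_{\GL(V)}(U)$-equivariant deformation retraction of $(\Br_V)^U$ onto a point, by identifying a canonical proper nonzero $U$-invariant subspace and using it as a ``cone point'' in the poset. First I would observe that since $U$ is a nontrivial $p$-group acting $\F$-linearly on $V$ (with $\F$ of characteristic $p$), the fixed subspace $V^U$ is nonzero: this is the standard fact that a $p$-group acting on a nonzero $\F_p$-vector space (or vector space over a field of characteristic $p$) has nonzero fixed points, applied to the nonzero $\F[U]$-module $V$. Dually, $U$ cannot act surjectively after passing to coinvariants, so the subspace $W_U := \sum_{u\in U}\im(u - \mathrm{id})$ is a proper $U$-invariant subspace; equivalently one can take the augmentation-ideal submodule $I_U \cdot V$, which is proper because $V/I_U V = V_U \neq 0$. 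Both $V^U$ and $I_U V$ are preserved by $N_{\GL(V)}(U)$, since conjugating by $g\in N_{\GL(V)}(U)$ permutes the elements of $U$.

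The key step is then a case split. If $U$ acts nontrivially on $V$, at least one of the following holds: $V^U \neq 0$ and $V^U \neq V$, or $I_U V \neq 0$ and $I_U V \neq V$. (If $V^U = V$ then $U$ acts trivially, contradiction; if $I_U V = 0$ then again $U$ acts trivially; and $V^U = V$ forces $I_U V = 0$, so the two ``bad'' coincidences cannot happen simultaneously.) Suppose first $0 \neq V^U \subsetneq V$; set $Z := V^U$. Every $U$-fixed subspace $S$ of $V$ (i.e. every vertex of $(\Br_V)^U$, since the $U$-fixed simplices of $\Br_V$ are exactly the flags of $U$-invariant proper nonzero subspaces) has the property that $S \cap Z$, $S$, and $S + Z$ are all $U$-invariant, and one checks $S \cap Z$ is either $0$ or a valid vertex, $S+Z$ is either $V$ or a valid vertex. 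The standard poset argument — as in the proof of Proposition~\ref{prop:pnchoose2}, where contractibility of the subposet of subspaces meeting a fixed hyperplane nontrivially was obtained from the zig-zag $S \mapsto S\cap H$ — shows that the poset of $U$-invariant proper nonzero subspaces deformation retracts onto the subposet of those containing $Z$ (via $S \leftarrow S+Z \rightarrow \text{(retract)}$ when $S+Z\neq V$, and separately handling $S$ with $S+Z = V$ using $S\cap Z$), and the latter has the initial/terminal object $Z$, hence is contractible. If instead $I_U V$ is the proper nonzero invariant subspace, run the dual argument with the order-reversing role, retracting onto the subposet of subspaces contained in $I_U V$. Every map in these homotopies is natural in $V$ for $N_{\GL(V)}(U)$, because $Z = V^U$ and $I_U V$ are $N_{\GL(V)}(U)$-stable and the retraction formulas are expressed purely in terms of $+$, $\cap$, and these canonical subspaces; so the contraction is $N_{\GL(V)}(U)$-equivariant, and by the equivariant nerve theorem / Quillen's theorem (equivariant version) it descends to an equivariant contraction of the nerve $(\Br_V)^U$.

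The main obstacle I anticipate is bookkeeping in the poset homotopy: the naive ``cone on $Z$'' map $S \mapsto S+Z$ fails to be a self-map of the poset of \emph{proper} subspaces precisely when $S + Z = V$, and the fix (falling back to $S \cap Z$ on that locus, as in Proposition~\ref{prop:pnchoose2}) must be assembled into a single zig-zag of natural transformations that is simultaneously a homotopy and $N_{\GL(V)}(U)$-equivariant. One must also be slightly careful that the case $V^U = V$ (equivalently $I_U V = 0$) is genuinely excluded by the hypothesis that $U$ is nontrivial \emph{as a subgroup of $\GL(V)$}, i.e. acts faithfully — so I would state that standing assumption explicitly at the start. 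Once the correct canonical subspace is chosen in each case, the rest is the same contractibility mechanism already used in this section.
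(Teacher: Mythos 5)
Your identification of the canonical invariant subspace is the right move: $V^U$ is exactly the submodule annihilated by the augmentation ideal (so it agrees with the paper's $V'$), it is $N_{\GL(V)}(U)$-stable by the conjugation argument, and your reasoning that $0 \neq V^U \subsetneq V$ is correct. In particular the case split you set up is unnecessary; both inequalities always hold simultaneously, so the $I_U V$ branch never arises. The gap is in the retraction you build on top of this. You write that for a $U$-invariant vertex $S$, ``$S \cap Z$ is either $0$ or a valid vertex,'' and you then try to route around the situation $S + Z = V$ by ``falling back to $S \cap Z$.'' But if $S \cap Z = 0$ there, you have nowhere to fall back to, and your zig-zag breaks. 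Moreover, $S \mapsto S \cap Z$ lands in the subposet of subspaces \emph{contained in} $Z$, not the subposet of those containing $Z$ that you declared as your target, so the two halves of your proposed homotopy do not assemble into a contraction onto a single cone point.

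The missing ingredient is that $S \cap V^U$ is \emph{never} zero for a nonzero $U$-invariant subspace $S$: the same $p$-group fixed-point argument you invoked for $V$ applies to the nonzero $\F[U]$-module $S$, giving $S^U = S \cap V^U \neq 0$. Once this is in hand, the case analysis and the two-sided zig-zag collapse: the single poset map $f(S) = S \cap V^U$ is a well-defined self-map of $(\Br_V)^U$, satisfies $f(S) \subseteq S$ (hence is homotopic to the identity) and $f(S) \subseteq V^U$ (hence is homotopic to the constant map at the vertex $V^U$), and it is manifestly $N_{\GL(V)}(U)$-equivariant. To get equivariant contractibility, one should also note that $f$ restricts to a self-map of the fixed-point poset $((\Br_V)^U)^\Gamma$ for every $\Gamma \subseteq N_{\GL(V)}(U)$, and that the same two inclusions give a conical contraction on each such fixed-point subposet; that explicit check is what should replace your appeal to an ``equivariant nerve theorem.''
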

\begin{proof}
The action of the group $U$ on the $\F$-vector space $V$ extends linearly to an action of the group ring $\F[U]$. Let $\mathcal{I}$ denote the \emph{augmentation ideal} of $\F[U]$, defined by generators
$$\mathcal{I}:=\langle u-1\rangle_{u\in U}.$$
Let $V'\subseteq V$ be the subspace annihilated by $\mathcal{I}$. Because $U$ contains at least one non-identity matrix, it must be that $V'\neq V$. The subspace $V'$ is preserved by the action of $N_{\GL(V)}(U)$. We claim that $V'\neq 0$. To prove this, it suffices to show that there is some $k>0$ such that $\mathcal{I}^k$ annihilates $V$. In the case where $U$ is a maximal unipotent subgroup of $\GL(V)$, the ideal $\mathcal{I}^{\dim(V)}$ annihilates $V$, and therefore for any unipotent subgroup $U$, $\mathcal{I}^k$ annihilates $V$ for some $k\le \dim(V)$.

Let $W\subsetneq V$ be a nonzero subspace that is preserved by $U$. Because $U$ is unipotent, $W$ has a vector $w$ such that $uw=w$ for every $u\in U$. This is equivalent to saying that $\mathcal{I}w=0$, and so it follows that the intersection $W \cap V'$ is nonzero. Thus, there is a well-defined $N_{\GL(V)}(U)$-equivariant poset map
$$f:(\Br_V)^U \rightarrow (\Br_V)^U$$
$$W \mapsto V'\cap W.$$
For every subgroup $\Gamma\subset N_{\GL(V)}(U)$, the map $f$ restricts to a map of fixed point spaces $f^{\Gamma}:((\Br_V)^U)^{\Gamma} \rightarrow ((\Br_V)^U)^{\Gamma}$. Because $V'\cap W \subset W$, the map $f^{\Gamma}$ is homotopic to the identity map. Because $V'\cap W \subset V'$, the map $f^{\Gamma}$ is homotopic to the constant map at $V'$. Therefore, the fixed point space $((\Br_V)^U)^{\Gamma}$ is contractible for every $\Gamma\subset N_{\GL(V)}(U)$, which completes the proof.

\end{proof}

\section{Fixed points in Equivariant Classifying Spaces}

Let $G$ be a $p$-group. The goal of this section is to compute the $G$-fixed points of the Steinberg summand of $B_G(\Z/p)_+^n$. To state the result of this computation, we must make a few definitions.

\begin{mydef}\label{definition:Frattini}
Let $\mathcal{C}$ denote the set of normal sugroups $H \unlhd G$ such that $G/H$ is an elementary abelian $p$-group. It is easily seen that the set $\mathcal{C}$ is closed under intersections, and thus $\mathcal{C}$ has a minimal element. As a poset, $\mathcal{C}$ is isomorphic to the poset of sub-$\F_p$-vector spaces of $G/F$. For each subgroup $H\in\mathcal{C}$, let $d(H)$ denote the rank of $G/H$ as an $\F_p$-vector space.
\end{mydef}

\begin{theorem}\label{thm:HsummandofSteinberg}
There is a decomposition of spectra
$$(e_nB_G(\Z/p)_+^n)^G \simeq \bigvee\limits_{H\in\mathcal{C}}E_n(H)$$
for spectra $E_n(H)$ which are given by the formula
$$E_n(H)\simeq e_{n-d(H)}B(\Z/p)_+^{n-d(H)}\sm \Sigma^{1-d(H)}\Br_{d(H)}^{\Diamond}\sm B(G/H)_+.$$
If $H$ and $K$ are subgroups such that $d(H\cap K)=d(H)+d(K)$, then the equivalence above respects the product structures on both sides.
\end{theorem}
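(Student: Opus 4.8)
The plan is to combine the general fixed‑point formula for equivariant homotopy orbits (Equation \ref{eq:fixedpointsformula}) with the contractibility result of Lemma \ref{lemma:unipotence}, and then to recognize each surviving summand via the induction isomorphism of Proposition \ref{proposition:induction}. First I would apply the decomposition of Theorem \ref{thm:commutingfunctors} (or rather its proof) with $Y=B_G(\Z/p)_+^n$, $H=G$, and $\Lambda=\GL_n$, but crucially I would \emph{also} unwind the $(\Z/p)^n$‑homotopy‑orbit structure hidden inside $B_G(\Z/p)^n$. Concretely, $B_G(\Z/p)_+^n = (\mathbb{S}^0)\sm_{(\Z/p)^n}(E_G(\Z/p)^n)_+$, so we are really taking $G$‑fixed points of a double homotopy‑orbit construction for the group $\GL_n\ltimes(\Z/p)^n$ (with $\GL_n$ acting on $(\Z/p)^n=\F_p^n$ in the standard way). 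Applying Equation \ref{eq:fixedpointsformula} to the outer group $(\Z/p)^n$ first, the $G$‑fixed points are indexed by conjugacy classes of homomorphisms $\varphi\colon G\to\GL_n\ltimes(\Z/p)^n$; but since $(\Z/p)^n$ is abelian and normal and $G$ is a $p$‑group acting on the $p$‑group $(\Z/p)^n$, each such $\varphi$ is determined up to conjugacy by its composite $\bar\varphi\colon G\to\GL_n$ together with a class in $H^1(G;\F_p^n)$, and a short cohomological argument (or a direct fixed‑point count) collapses the data to: a homomorphism $G\to\GL_n$ whose image is a $p$‑group, hence conjugate into the unipotent upper triangular subgroup.

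The second step is the contractibility input. By Lemma \ref{lemma:unipotence}, $(\Br_n)^{U}$ — hence $(\Br_n^{\Diamond})^{U}$ — is equivariantly contractible for every \emph{nontrivial} unipotent subgroup $U$, so the only homomorphisms $G\to\GL_n$ contributing a nontrivial summand are those whose image acts unipotently \emph{and} whose induced action on $\Br_n$ is handled by the complementary piece of the poset. After the dust settles, the surviving index set is exactly: $G$ acts on $\F_p^n$ through a quotient $G\to G/H$ with $H\in\mathcal C$, $G/H$ embeds as a subspace $\F_p^{d(H)}\subseteq\F_p^n$ sitting as a direct summand, and $G$ acts trivially on the complement. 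This is precisely the place where Proposition \ref{proposition:induction} enters: writing $\F_p^n=W\oplus W'$ with $W=\F_p^{d}$ the image summand (on which $G$ acts via $G/H$) and $W'=\F_p^{n-d}$ the fixed complement, the $U$‑fixed flag complex $(\Br_n^{\Diamond})^{U}$ (for $U=\im(G\to\GL_n)$) breaks up via the parabolic product map into $\Sigma\Br_{W'}^{\Diamond}\wedge(\text{something built from }\Br_W)$, and taking homotopy orbits by the centralizer $C_{\GL_n}(U)$ — which contains $\GL(W')$ and the full group $\GL(W)^{G/H}\cdot(\text{translations})$ — produces exactly the factors $e_{n-d}B(\Z/p)_+^{n-d}$, $\Sigma^{1-d}\Br_d^{\Diamond}$, and $B(G/H)_+$ in the claimed formula $E_n(H)\simeq e_{n-d(H)}B(\Z/p)_+^{n-d(H)}\sm\Sigma^{1-d(H)}\Br_{d(H)}^{\Diamond}\sm B(G/H)_+$. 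Tracking the suspension/desuspension bookkeeping, one $\Sigma^{1-n}$ from the definition of $e_n$ splits as $\Sigma^{1-(n-d)}\wedge\Sigma^{1-d}\wedge\Sigma^{-1}$, and the leftover $\Sigma^{-1}$ is absorbed into the join decomposition $\Br_W^{\Diamond}\star\Br_{W'}^{\Diamond}\simeq\Sigma(\Br_W^{\Diamond}\wedge\Br_{W'}^{\Diamond})$ — this is the kind of routine shift I would verify but not belabor.

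For the final compatibility claim, I would argue as follows. The product structure on the left side $(e_nB_G(\Z/p)_+^n)^G$ comes (via Theorem \ref{thm:commutingfunctors}, which is monoidal by the construction of the Steinberg product on flag complexes in Section \ref{definition:steinbergsummand}) from the external Steinberg products $e_i\boxtimes e_j\to e_{i+j}$ smashed with the multiplication on $B(\Z/p)_+$. On the right side, the summands $E_n(H)$ and $E_m(K)$ multiply, under the block inclusion $\GL_n\times\GL_m\hookrightarrow\GL_{n+m}$, into the summand indexed by the subgroup of $G\times G$ (or, after restricting along the diagonal, of $G$) whose associated elementary abelian quotient has rank $d(H)+d(K)$; the hypothesis $d(H\cap K)=d(H)+d(K)$ says exactly that the two ``directions'' $G/H$ and $G/K$ are linearly independent in $G/F$, so that $H\cap K\in\mathcal C$ with $d(H\cap K)=d(H)+d(K)$ and the block‑sum decomposition $\F_p^{d(H)}\oplus\F_p^{d(K)}=\F_p^{d(H\cap K)}$ matches. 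Under this matching the product map on the right is the smash of the Steinberg products $e_{n-d(H)}\boxtimes e_{m-d(K)}\to e_{(n+m)-d(H\cap K)}$, $\Br_{d(H)}^{\Diamond}\star\Br_{d(K)}^{\Diamond}\to\Br_{d(H\cap K)}^{\Diamond}$ (this is the product on flag complexes from Section \ref{definition:steinbergsummand}), and the map $B(G/H)_+\wedge B(G/K)_+\to B(G/(H\cap K))_+$ induced by $G/(H\cap K)\hookrightarrow G/H\times G/K$; so the two product structures agree summand‑by‑summand, and it remains only to check they agree on the nose with the identifications made in the first two steps. The main obstacle I anticipate is the bookkeeping of the \emph{centralizer} $C_{\GL_n}(\im f)$ and its action: one must show it is precisely $\GL(W')\times\big(\text{the centralizer of }G/H\text{ acting on }W\big)\ltimes\Hom(W',W)^{G/H}$ up to finite‑index issues that vanish $p$‑locally, and that the resulting homotopy orbits reassemble $B(\Z/p)^{n-d}$ and $B(G/H)$ correctly rather than some twisted variant — in other words, verifying that no extra monodromy intervenes in the identification $(X^{\Gamma_f})_{hC_{\GL_n}(\im f)}\simeq E_n(H)$ is where the real work lies.
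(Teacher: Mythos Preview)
Your route—treating $e_nB_G(\Z/p)_+^n$ as homotopy orbits for the semidirect product $\GL_n\ltimes(\Z/p)^n$ and applying Equation~\ref{eq:fixedpointsformula} once—is not how the paper proceeds, and as written it contains a genuine confusion. The paper separates the two group actions: it first invokes Theorem~\ref{thm:commutingfunctors} to obtain $(e_nB_G(\Z/p)_+^n)^G\simeq e_n\bigl((B_G(\Z/p)_+^n)^G\bigr)$, which already absorbs every use of Lemma~\ref{lemma:unipotence}. Then, because $(\Z/p)^n$ is abelian, Equation~\ref{eq:fixedpointsformula} applied with $\Lambda=(\Z/p)^n$ gives $(B_G(\Z/p)_+^n)^G\simeq\bigvee_{\mathrm{Hom}(G,(\Z/p)^n)}B(\Z/p)_+^n$ with no centralizer subtleties at all. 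Grouping homomorphisms by kernel writes the index set as $\bigsqcup_{H\in\mathcal{C}}V_{d(H)}(\F_p^n)$ (Equation~\ref{eqn:stiefeldecomposition}), so $E_n(H)=e_n\bigl(V_{d(H)}(\F_p^n)_+\sm B(\Z/p)_+^n\bigr)$, and the explicit formula together with the product compatibility is exactly Lemma~\ref{lemma:stiefelvariety} applied with $\mathcal{F}((\Z/p)^k)=B(\Z/p)_+^k$; Proposition~\ref{proposition:induction} enters only inside the proof of that lemma.

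The specific gap in your second paragraph is this: once Lemma~\ref{lemma:unipotence} has killed the summands with nontrivial $\bar\varphi\colon G\to\GL_n$, the surviving $\varphi$ all have $\bar\varphi=0$, so the subgroup you call $U=\im(G\to\GL_n)$ is \emph{trivial} and $(\Br_n^{\Diamond})^U=\Br_n^{\Diamond}$. The distinguished $d$-dimensional subspace $W$ does not come from any $\GL_n$-image; it is the image of $\varphi\colon G\to(\Z/p)^n$ inside the abelian factor. Accordingly the centralizer you must quotient by is not $C_{\GL_n}(U)=\GL_n$ but $C_{\GL_n\ltimes(\Z/p)^n}(\im\varphi)$, which is the pointwise stabilizer $\GL(\F_p^n,E_d)$ extended by $(\Z/p)^n$. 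The ``real work'' you anticipate—computing $(\Sigma^{1-n}\Br_n^{\Diamond}\sm\mathcal{F}(\F_p^n))_{h\GL(\F_p^n,E_d)}$ via Proposition~\ref{proposition:induction} and the transitive $\GL(\F_p^n,E_d)$-action on complements of $E_d$—is precisely what the paper packages as Lemma~\ref{lemma:stiefelvariety}. Your approach can be salvaged once the two roles of $G\to\GL_n$ and $G\to(\Z/p)^n$ are disentangled, but at that point it reduces to the paper's argument with extra bookkeeping.
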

The spectra $E_n(H)$ are defined in Definition \ref{def:H-summand}, along with their product structure. The equivalence $E_n(H)\simeq e_{n-d(H)}B(\Z/p)_+^{n-d(H)}\sm \Sigma^{1-d(H)}\Br_{d(H)}^{\Diamond}\sm B(G/H)_+$ is Proposition \ref{prop:HsummandofSteinberg}.

\subsection{The mod $p$ Stiefel variety}
\label{sec:stiefelvariety}
Let $n$ and $d$ be nonnegative integers. Let $V_d(\F_p^n)$ denote the set of $n\times d$ matrices with entries in the field $\F_p$, and with nullspace zero. Then $V_d(\F_p^n)$ is a finite set with an action of the group $\GL_n(\F_p)$. It is a mod $p$ analogue of the Stiefel manifold $V_d(\mathbb{R}^n)$ of orthonormal $d$-frames in Euclidean $n$-space. Note that there is an inclusion of $(\GL_m\times \GL_n)$-sets,
$$V_c(\F_p^m)\times V_d(\F_p^n)\hookrightarrow V_{c+d}(\F_p^{m+n})$$
given by block inclusion of matrices.

Let $\mathcal{F}$ be a functor from the category of finite dimensional mod $p$ vector spaces with isomorphisms, to the category $\Top_*$ of pointed spaces, such that
\begin{itemize}
\item For any finite dimensional mod $p$ vector spaces $V$ and $W$, there is an equivalence $\mathcal{F}(V\oplus W)\simeq \mathcal{F}(V)\sm \mathcal{F}(W)$ of $(\GL(V)\times \GL(W))$-spaces.
\item There is an equivalence $\mathcal{F}(0)\simeq S^0$.
\end{itemize}
For every integer $n\ge 0$, the pointed space $\mathcal{F}(\F_p^n)$ carries an action of the group $\GL_n(\F_p)$.
One may then consider its Steinberg summand $e_n\mathcal{F}(\F_p^n)=(\Sigma^{1-n}\Br_n^{\Diamond}\sm \mathcal{F}(\F_p^n))_{h\GL_n}$, which is a spectrum. These spectra are related by product maps
$$e_k\mathcal{F}(\F_p^k)\sm e_{\ell}\mathcal{F}(\F_p^{\ell}) \rightarrow e_{k+\ell}\mathcal{F}(\F_p^{k+\ell}),$$
which are built using the product $\Sigma^{1-k}\Br_k^{\Diamond}\sm \Sigma^{1-\ell}\Br_{\ell}^{\Diamond} \rightarrow \Sigma^{1-(k+\ell)}\Br_{k+\ell}^{\Diamond}$ and the block inclusion $\GL_k\times \GL_{\ell} \subset \GL_{k+\ell}$. In this section, we will prove the following lemma which relates the mod $p$ Stiefel variety $V_d(\F_p^n)$ to Steinberg summands.

\begin{lemma}\label{lemma:stiefelvariety}
Let $\mathcal{F}$ be a functor as above. Let $n, d$ be nonnegative integers such that $n\ge d$. Then there is an equivalence of spectra
$$(\Sigma^{1-d}\Br_d^{\Diamond}\sm \mathcal{F}(\F_p^d))\sm e_{n-d}\mathcal{F}(\F_p^{n-d})\rightarrow e_n(V_d(\F_p^n)_+\sm \mathcal{F}(\F_p^n)).$$
Denote the spectrum on the left by $A(n,d)$ and the spectrum on the right by $B(n,d)$. There are obvious product maps $A(n,d)\sm A(m,c) \rightarrow A(m+n,c+d)$ and $B(n,d)\sm B(m,c) \rightarrow B(m+n,c+d)$. Then the equivalence above respects these product maps.
\end{lemma}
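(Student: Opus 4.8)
The plan is to build the equivalence $A(n,d)\to B(n,d)$ by decomposing the $\GL_n$-set $V_d(\F_p^n)$ according to the image of a $d$-frame, and then identifying the resulting homotopy-orbit summand with the left-hand side. Concretely, a matrix in $V_d(\F_p^n)$ has image a $d$-dimensional subspace $W\subseteq\F_p^n$ together with an ordered basis of $W$; so as a $\GL_n$-set, $V_d(\F_p^n)\cong\GL_n/P$, where $P$ is the stabilizer of the standard $d$-frame, i.e. the subgroup of block-upper-triangular matrices $\begin{pmatrix}I_d & *\\ 0 & A\end{pmatrix}$ with $A\in\GL_{n-d}$. First I would therefore rewrite $e_n(V_d(\F_p^n)_+\sm\mathcal{F}(\F_p^n))$ as $(\Sigma^{1-n}\Br_n^{\Diamond}\sm(\GL_n/P)_+\sm\mathcal{F}(\F_p^n))_{h\GL_n}$, and use the standard shearing identity $(\GL_n/P)_+\sm Z)_{h\GL_n}\simeq Z_{hP}$ (for $Z$ a $\GL_n$-spectrum, restricted to $P$) to get $B(n,d)\simeq(\Sigma^{1-n}\Br_n^{\Diamond}\sm\mathcal{F}(\F_p^n))_{hP}$.

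The next step is to compute those $P$-homotopy orbits. Write $P = U\rtimes(\GL_d\times\GL_{n-d})$, where $U=U_{d,n-d}$ is the unipotent radical. I would first take $U$-homotopy orbits, then $(\GL_d\times\GL_{n-d})$-homotopy orbits. The key input is Proposition \ref{proposition:induction}: applied to $V=\F_p^n$ and $W=\F_p^d$, the parabolic there is exactly our $P$, and the proposition gives a $P$-equivariant equivalence $\Sigma\Br_{\F_p^d}^{\Diamond}\sm\bigvee_{W'\in\mathcal{S}}\Sigma\Br_{W'}^{\Diamond}\xrightarrow{\sim}\Sigma\Br_n^{\Diamond}$, where $\mathcal{S}=\mathcal{S}_{\F_p^d}$ is the $P$-set of complements, which is a single $P$-orbit with stabilizer $\GL_d\times\GL_{n-d}$. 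Desuspending and smashing with $\mathcal{F}(\F_p^n)\simeq\mathcal{F}(\F_p^d)\sm\mathcal{F}(\F_p^{n-d})$ (using the first bullet hypothesis on $\mathcal{F}$, which is $(\GL_d\times\GL_{n-d})$-equivariant hence $P$-equivariant through the quotient), the $P$-spectrum $\Sigma^{1-n}\Br_n^{\Diamond}\sm\mathcal{F}(\F_p^n)$ becomes $\mathrm{Ind}_{\GL_d\times\GL_{n-d}}^{P}\big((\Sigma^{1-d}\Br_d^{\Diamond}\sm\mathcal{F}(\F_p^d))\sm(\Sigma^{1-(n-d)}\Br_{n-d}^{\Diamond}\sm\mathcal{F}(\F_p^{n-d}))\big)$, the point being that $\GL_d\times\GL_{n-d}$ acts trivially on the $\Br_d$-part after restriction — wait: I should be careful here, the $\GL_d$ factor does act on $\Br_d^{\Diamond}$, and the $\GL_{n-d}$ factor acts on $\Br_{n-d}^{\Diamond}$. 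Taking homotopy orbits over $P$ of an induced spectrum collapses to homotopy orbits over $\GL_d\times\GL_{n-d}$ of the thing being induced, giving $(\Sigma^{1-d}\Br_d^{\Diamond}\sm\mathcal{F}(\F_p^d))_{h\GL_d}\sm e_{n-d}\mathcal{F}(\F_p^{n-d})$. But wait — I do \emph{not} want a $\GL_d$-homotopy-orbit on the $\Br_d$-factor; the left-hand side $A(n,d)$ has a bare $\Sigma^{1-d}\Br_d^{\Diamond}\sm\mathcal{F}(\F_p^d)$, not its $\GL_d$-homotopy orbits. The resolution is that Proposition \ref{proposition:induction}'s equivalence is $P$-equivariant where $P$ acts on the target through its quotient onto $\GL_d\times\GL_{n-d}$ composed with... no. Let me restate: the cleanest route is to observe that in $\mathcal{S}_W$, once we fix the distinguished complement $W'_0=\F_p^{n-d}$, the stabilizer of the pair $(W,W'_0)$ inside $P$ is $\GL_d\times\GL_{n-d}$, and the wedge $\bigvee_{W'}\Sigma\Br_{W'}^{\Diamond}$ is $\mathrm{Ind}_{\GL_d\times\GL_{n-d}}^P\Sigma\Br_{\F_p^{n-d}}^{\Diamond}$. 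Thus $\Sigma^{1-n}\Br_n^{\Diamond}\sm\mathcal{F}(\F_p^n)\simeq(\Sigma^{1-d}\Br_d^{\Diamond}\sm\mathcal{F}(\F_p^d))\sm\mathrm{Ind}_{\GL_d\times\GL_{n-d}}^P\big(\Sigma^{1-(n-d)}\Br_{n-d}^{\Diamond}\sm\mathcal{F}(\F_p^{n-d})\big)$ as $P$-spectra, where the first factor carries the $P$-action restricted from the $\GL_d$-action (through the block projection $P\to\GL_d$, which is a genuine group homomorphism), and $P$ permutes the induced factor. Taking $P$-homotopy orbits: the $\GL_d$-action on the first factor is free on the $E\GL_d$-direction only after... hmm. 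Actually the honest statement is $(Z\sm\mathrm{Ind}_{\GL_d\times\GL_{n-d}}^P Y)_{hP}\simeq(\mathrm{Res}_{\GL_d\times\GL_{n-d}}Z\sm Y)_{h(\GL_d\times\GL_{n-d})}$, and since $\GL_{n-d}$ acts trivially on $\mathrm{Res}\,Z$ and $\GL_d$ acts trivially on $Y$, this is $(Z_{h\GL_d})\sm(Y_{h\GL_{n-d}})$ — so I would genuinely get $\GL_d$-homotopy orbits on the $\Br_d$-factor.

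So I expect the main obstacle to be precisely this discrepancy, and I think its resolution is that $\Br_d^{\Diamond}$ is not what appears alone on the left — rather, one should use that $e_d\mathcal{F}(\F_p^d)$ is \emph{not} a factor of $A(n,d)$; $A(n,d)$ has $\Sigma^{1-d}\Br_d^{\Diamond}\sm\mathcal{F}(\F_p^d)$ \emph{without} $\GL_d$-orbits, smashed against $e_{n-d}\mathcal{F}(\F_p^{n-d})$ \emph{with} $\GL_{n-d}$-orbits. The asymmetry is the clue: the decomposition of $V_d(\F_p^n)$ should not be by the full frame but such that the $\GL_d$ acting on the frame is \emph{not} quotiented out. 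Indeed $V_d(\F_p^n)=\GL_n/P$ with $P$ the frame-stabilizer, and $P\cap(\text{the }\GL_d\text{ block})=1$ — the $\GL_d$ block is \emph{not} inside $P$; rather $P$ contains only the $\GL_{n-d}$ block (plus $U$). So when we take $P$-homotopy orbits, only $\GL_{n-d}$ (and $U$) get quotiented, and the $\GL_d$-action on $\Br_d^{\Diamond}$ survives as a residual (naïve) action — which matches $A(n,d)$ exactly if we interpret both sides as spectra with residual $\GL_d$-action, or simply forget it. I would carry this out by: (1) identifying $V_d(\F_p^n)\cong\GL_n/P$ with $P$ the frame-stabilizer, so $B(n,d)\simeq(\Sigma^{1-n}\Br_n^{\Diamond}\sm\mathcal{F}(\F_p^n))_{hP}$; (2) applying Proposition \ref{proposition:induction} with $V=\F_p^n$, $W=\F_p^d$ — but now noting the parabolic $P_W$ there contains the \emph{whole} $\GL_d$-block, so I actually need the \emph{opposite} parabolic or equivalently apply the proposition with $W$ the last $n-d$ coordinates; (3) doing the $U$- then $\GL_{n-d}$-homotopy-orbit computation via the induction formula, after which the $\Br_d^{\Diamond}\sm\mathcal{F}(\F_p^d)$-factor emerges untouched; (4) tracking the product maps, which reduce to compatibility of the block inclusions $P_{(n,d)}\times P_{(m,c)}\hookrightarrow P_{(m+n,c+d)}$ with the Steinberg product $\Br_{n-d}^{\Diamond}\sm\Br_{m-c}^{\Diamond}\to\Br_{(n-d)+(m-c)}^{\Diamond}$ and the block inclusion of frames — this is the routine part, checkable on top homology using the formulas already established for the Steinberg product. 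The genuinely hard bookkeeping step is (3): getting the group-theory of which $\GL$-blocks sit inside the frame-stabilizer $P$ exactly right, so that the homotopy-orbit collapse lands on $\Sigma^{1-d}\Br_d^{\Diamond}\sm\mathcal{F}(\F_p^d)$ with no spurious $\GL_d$-orbits and with the correct $e_{n-d}$ on the other factor.
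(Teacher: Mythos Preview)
Your approach is essentially the paper's, and your final diagnosis is correct: the frame-stabilizer $P=\GL(\F_p^n,E_d)=\begin{pmatrix}I_d & *\\ 0 & \GL_{n-d}\end{pmatrix}$ contains the $\GL_{n-d}$-block and the unipotent $U$ but \emph{not} the $\GL_d$-block, so only $\GL_{n-d}$-orbits appear and the factor $\Sigma^{1-d}\Br_d^{\Diamond}\sm\mathcal{F}(\F_p^d)$ emerges untouched. The paper runs the same chain of equivalences in the opposite direction, starting from $A(n,d)$ and inducing up to $B(n,d)$, using exactly the two group-theoretic facts you isolate: $V_d(\F_p^n)\cong\GL_n/P$ and $\mathcal{S}_{E_d}\cong P/\GL(E_{n-d})$.

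One correction to your step~(2): you do not need the opposite parabolic, and taking $W$ to be the last $n-d$ coordinates would not help (your $P$ is not contained in that $P_W$). The clean resolution is simply that Proposition~\ref{proposition:induction} applied with $W=E_d$ gives a $P_W$-equivariant equivalence, hence a $P$-equivariant one since $P\subset P_W$; then as a $P$-set (rather than a $P_W$-set) the complement set $\mathcal{S}_{E_d}$ is exactly $P/\GL(E_{n-d})$, because $P$ already acts transitively on complements with stabilizer $\GL(E_{n-d})$. This is precisely the paper's observation~(1), and it dissolves your worry about a spurious $\GL_d$-quotient: the induced wedge is $\mathrm{Ind}_{\GL(E_{n-d})}^{P}$ rather than $\mathrm{Ind}_{\GL_d\times\GL_{n-d}}^{P_W}$, so the collapse of $P$-homotopy orbits lands on $\GL(E_{n-d})$-homotopy orbits only, giving $A(n,d)$ on the nose.
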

\begin{proof}

Let $E_d \subset \F_p^n$ denote the subgroup spanned by the first $d$ coordinates, and let $E_{n-d}\subset \F_p^n$ denote the subgroup spanned by the last $n-d$ coordinates. Let $\Br_d^{\Diamond}:=\Br_{E_d}^{\Diamond}$ and $\Br_{n-d}^{\Diamond}:=\Br_{E_{n-d}}^{\Diamond}$. Let $\GL(E_d), \GL(E_{n-d}), \GL(\F_p^n,E_d)\subset \GL_n$ denote the subgroups of matrices
$$\GL(E_d)=\begin{pmatrix}
\GL_d & 0\\
0& I_{n-d}
\end{pmatrix}, \hskip 0.2in \GL(E_{n-d})=\begin{pmatrix} 
I_d & 0\\
0 & \GL_{n-d}
\end{pmatrix},\hskip 0.2in \GL(\F_p^n,E_d)=\begin{pmatrix}
I_d & *\\
0& \GL_{n-d}
\end{pmatrix}.$$
Then $\GL(\F_p^n,E_d)$ is the subgroup of matrices which act by the identity on $E_d$. Let $\mathcal{S}$ denote the set of subspaces $W\subset \F_p^n$ of dimension $(n-d)$ such that $W\perp E_d$. We have the following two observations:
\begin{enumerate}
\item As a $\GL(\F_p^n, E_d)$-torsor, $\mathcal{S}=\GL(\F_p^n, E_d)/\GL(E_{n-d})$.
\item As a $\GL_n$-torsor, $V_d(\F_p^n)=\GL_n/\GL(\F_p^n,E_d)$.
\end{enumerate}
Therefore,
$$\begin{aligned}
(\Sigma^{1-d}\Br_d^{\Diamond}\sm \mathcal{F}(\F_p^d))\sm e_{n-d}\mathcal{F}(\F_p^{n-d}) & \\
:= (\Sigma^{1-d}\Br_d^{\Diamond}\sm \mathcal{F}(E_d)\sm \Sigma^{1-n+d}\Br_{n-d}^{\Diamond}\sm \mathcal{F}(E_{n-d}))_{h\GL(E_{n-d})} & \hskip 0.2in \text{(by definition))}\\
\simeq (\Sigma^{1-d}\Br_d^{\Diamond}\sm \mathcal{F}(E_d)\sm \bigvee\limits_{W\in\mathcal{S}}\Sigma^{1-n+d}\Br_W^{\Diamond}\sm \mathcal{F}(W))_{h\GL(\F_p^n,E_d)} & \hskip 0.2in \text{(by (1) above)}\\
\stackrel{\simeq}{\rightarrow} (\Sigma^{1-n}\Br_n^{\Diamond}\sm \mathcal{F}(\F_p^n))_{h\GL(\F_p^n,E_d)} & \hskip 0.2in \text{(by Proposition \ref{proposition:induction})}\\
\simeq (\Sigma^{1-n}\Br_n^{\Diamond}\sm V_d(\F_p^n)_+\sm \mathcal{F}(\F_p^n))_{h\GL_n} & \hskip 0.2in \text{(by (2) above)}.
\end{aligned}$$
The fact that this equivalence respects the product maps is a routine check.
\end{proof}

\subsection{The fixed points of the Steinberg summand of an equivariant classifying space}\label{sec:fixedpoints}

Let $G$ be a finite $p$-group, and let $n$ be a positive integer. Any homomorphism from $G$ to $(\Z/p)^n$ has kernel contained in $\mathcal{C}$.
\begin{mydef}
For each subgroup $H\in \mathcal{C}$, let $$\mathrm{Hom}(G, (\Z/p)^n))[H]\subset \mathrm{Hom}(G, (\Z/p)^n)$$ denote the set of homomorphisms with kernel $H$.
\end{mydef}
Then $\mathrm{Hom}(G, (\Z/p)^n))=\bigsqcup\limits_{H\in\mathcal{C}}\mathrm{Hom}(G, (\Z/p)^n))[H]$. A homomorphism from $G$ to $(\Z/p)^n$ with kernel $H$ is the same as a monomorphism from $G/H$ to $(\Z/p)^n$. Thus, the $\GL_n$-torsor $\mathrm{Hom}(G, (\Z/p)^n))[H]$ is identified with the mod $p$ Stiefel variety $V_{d(H)}(\F_p^n)$ (see \ref{lemma:stiefelvariety}), and so
\begin{equation}\label{eqn:stiefeldecomposition}\mathrm{Hom}(G,(\Z/p)^n)=\bigsqcup\limits_{H\in\mathcal{C}}V_{d(H)}(\F_p^n).
\end{equation}

Now let us study the Steinberg summand of the $G$-fixed points of the equivariant classifying space $B_G(\Z/p)_+^n$. Equation \ref{eq:fixedpointsformula} tells us that
$$
\begin{aligned}
e_n((B_G(\Z/p)_+^n)^G)&\simeq e_n\left(\bigvee\limits_{\mathrm{Hom}(G,(\Z/p)^n)}B(\Z/p)^n_+\right)\\
&\simeq \bigvee\limits_{H\in\mathcal{C}}\left(e_n\bigvee\limits_{\mathrm{Hom}(G, (\Z/p)^n))[H]}B(\Z/p)_+^n\right).
\end{aligned}$$
\begin{mydef}\label{def:H-summand}
Let $n$ be a positive integer and $H\in\mathcal{C}$ be a subgroup of $G$. The spectrum
$$e_n\bigvee\limits_{\mathrm{Hom}(G, (\Z/p)^n))[H]}B(\Z/p)_+^n$$
is called the \emph{$H$-summand of $e_n((B_G(\Z/p)_+^n)^G)$.} We denote it by $E_n(H)$. For any two positive integers $m, n$ and subgroups $H, K\in\mathcal{C}$, there is a product map
$$E_m(H)\sm E_n(K) \rightarrow E_{m+n}(H\cap K)$$
which is determined by the product maps 
$$e_m((B_G(\Z/p)_+^m)^G)\sm e_n((B_G(\Z/p)_+^n)^G) \rightarrow e_{m+n}((B_G(\Z/p)_+^{m+n})^G).$$
\end{mydef}

Let $m, n$ be any two positive integers. There is an obvious isomorphism of $(\GL_m\times \GL_n)$-sets
$$\mathrm{Hom}(G,(\Z/p)^m)\times \mathrm{Hom}(G,(\Z/p)^n) \cong \mathrm{Hom}(G,(\Z/p)^{m+n}).$$
Under the identification of Equation \ref{eqn:stiefeldecomposition}, the isomorphism above yields product maps on the components
$$V_{d(H)}(\F_p^m)\times V_{d(K)}(\F_p^n) \rightarrow V_{d(H\cap K)}(\F_p^{m+n}).$$
In the situation where $d(H\cap K)=d(H)+d(K)$, this product is given by block inclusion of matrices, for an appropriate choice of basis. (Subgroups $H, K\subset G$ which satisfy the property that $d(H\cap K)=d(H)+d(K)$ are called \emph{transverse} in (\cite{SymmetricPowers}).)
\begin{proposition}\label{prop:HsummandofSteinberg}
There is an equivalence of spectra
$$E_n(H)\simeq e_{n-d(H)}B(\Z/p)_+^{n-d(H)}\sm \Sigma^{1-d(H)}\Br_{d(H)}^{\Diamond}\sm B(G/H)_+.$$
If $n,m$ are two positive integers and $H, K\in\mathcal{C}$ are transverse subgroups, then under the equivalence above, the product map $E_n(H)\sm E_m(K) \rightarrow E_{n+m}(H\cap K)$ is identified with the product on the right hand term that is built using the following three maps
$$e_{n-d(H)}B(\Z/p)_+^{n-d(H)}\sm e_{m-d(K)}B(\Z/p)_+^{m-d(K)} \rightarrow e_{m+n-d(H\cap K)}B(\Z/p)_+^{m+n-d(H\cap K)},$$
$$\Sigma^{1-d(H)}\Br_{d(H)}^{\Diamond}\sm \Sigma^{1-d(K)}\Br_{d(K)}^{\Diamond} \rightarrow \Sigma^{1-d(H\cap K)}\Br_{d(H\cap K)}^{\Diamond},$$
$$B(G/H)_+\sm B(G/K)_+ \cong B(G/(H\cap K))_+.$$
\end{proposition}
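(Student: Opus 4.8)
The plan is to recognize $E_n(H)$ as a Steinberg summand of exactly the shape handled by Lemma \ref{lemma:stiefelvariety}, and then to read off both the equivalence and its multiplicative behavior from that lemma applied to a suitable functor $\mathcal{F}$. First I would unwind Definition \ref{def:H-summand}: by definition $E_n(H)=e_n\big(\bigvee_{\mathrm{Hom}(G,(\Z/p)^n)[H]}B(\Z/p)_+^n\big)$, and by Equation \ref{eq:fixedpointsformula} (with $\Lambda=(\Z/p)^n$ abelian, so every relevant centralizer $C_{(\Z/p)^n}(\im f)$ is all of $(\Z/p)^n$) the $\GL_n$-action here is diagonal: it permutes the wedge summands via post-composition on $\mathrm{Hom}(G,(\Z/p)^n)[H]$ and acts on each copy of $B(\Z/p)^n$ via $\GL_n=\Aut((\Z/p)^n)$. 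Rewriting a wedge over a $\GL_n$-set as a smash product gives
$$E_n(H)=e_n\big(\mathrm{Hom}(G,(\Z/p)^n)[H]_+\sm B(\Z/p)_+^n\big).$$

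A homomorphism $G\to(\Z/p)^n$ with kernel $H$ is the same as a monomorphism $G/H\hookrightarrow(\Z/p)^n$, i.e. an injective $\F_p$-linear map $\F_p^{d(H)}\to\F_p^n$; so a choice of basis of $G/H$ identifies $\mathrm{Hom}(G,(\Z/p)^n)[H]$ with the mod $p$ Stiefel variety $V_{d(H)}(\F_p^n)$ as $\GL_n$-sets, whence $E_n(H)\simeq e_n\big(V_{d(H)}(\F_p^n)_+\sm B(\Z/p)_+^n\big)$. Next I would apply Lemma \ref{lemma:stiefelvariety} to the functor $\mathcal{F}(V):=B(V)_+$ --- the classifying space of $V$ viewed as a finite abelian group, with a disjoint basepoint --- after checking its hypotheses: the natural homeomorphism $B(V\oplus W)\cong BV\times BW$ yields $(\GL(V)\times\GL(W))$-equivariant equivalences $\mathcal{F}(V\oplus W)\cong\mathcal{F}(V)\sm\mathcal{F}(W)$, and $\mathcal{F}(0)\cong S^0$. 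With $d=d(H)$ the lemma produces an equivalence
$$\big(\Sigma^{1-d(H)}\Br_{d(H)}^{\Diamond}\sm\mathcal{F}(\F_p^{d(H)})\big)\sm e_{n-d(H)}\mathcal{F}(\F_p^{n-d(H)})\;\stackrel{\simeq}{\longrightarrow}\;e_n\big(V_{d(H)}(\F_p^n)_+\sm\mathcal{F}(\F_p^n)\big)=E_n(H),$$
and identifying $\mathcal{F}(\F_p^{d(H)})=B((\Z/p)^{d(H)})_+\cong B(G/H)_+$ (via the same basis of $G/H$), $\mathcal{F}(\F_p^{n-d(H)})=B(\Z/p)_+^{n-d(H)}$, and reordering the smash factors, gives the asserted formula for $E_n(H)$.

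For the multiplicative statement, let $H,K\in\mathcal{C}$ be transverse, so $d(H\cap K)=d(H)+d(K)$. By the discussion preceding the proposition, under the identification of Equation \ref{eqn:stiefeldecomposition} the products $V_{d(H)}(\F_p^m)\times V_{d(K)}(\F_p^n)\to V_{d(H\cap K)}(\F_p^{m+n})$ are then block inclusions of matrices --- precisely the maps building the product $B(m,c)\sm B(n,d)\to B(m+n,c+d)$ of Lemma \ref{lemma:stiefelvariety} --- provided one takes bases of $G/H$ and $G/K$ and combines them to a basis of $G/(H\cap K)\cong G/H\times G/K$. One then checks that, under the identification $E_*(H)\simeq B(*,d(H))$, the product $E_m(H)\sm E_n(K)\to E_{m+n}(H\cap K)$ induced by the Steinberg products on $e_*((B_G(\Z/p)_+^*)^G)$ is the product $B(m,d(H))\sm B(n,d(K))\to B(m+n,d(H\cap K))$ of the lemma; this comes down to the fact that the Steinberg product on equivariant classifying spaces is assembled from the flag-complex product, the block inclusions $\GL_i\times\GL_j\subset\GL_{i+j}$, the classifying-space multiplication $B(\Z/p)^m\times B(\Z/p)^n\cong B(\Z/p)^{m+n}$, and the product on $\mathrm{Hom}$-sets. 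The product-compatibility clause of Lemma \ref{lemma:stiefelvariety} then identifies this with the product on the $A$-side, which unwinds --- noting that for $\mathcal{F}=B(-)_+$ the equivalence $\mathcal{F}(\F_p^{d(H)})\sm\mathcal{F}(\F_p^{d(K)})\simeq\mathcal{F}(\F_p^{d(H\cap K)})$ becomes $B(G/H)_+\sm B(G/K)_+\cong B(G/(H\cap K))_+$ (an isomorphism precisely because $G/(H\cap K)\cong G/H\times G/K$ by transversality), while the $e_*\mathcal{F}(\F_p^*)$ factor becomes the Steinberg product on $e_*B(\Z/p)_+^*$ --- into exactly the three maps listed in the statement.

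The content all sits in Lemma \ref{lemma:stiefelvariety} (and, through it, Proposition \ref{proposition:induction}); once $E_n(H)$ is put in the form $e_n\big(V_{d(H)}(\F_p^n)_+\sm B(\Z/p)_+^n\big)$ the equivalence is formal. The step I expect to demand the most care is the product compatibility: one must trace the Steinberg product on $e_*((B_G(\Z/p)_+^*)^G)$ through the kernel-decomposition and the Stiefel identification and confirm that on transverse summands it really is the block-inclusion product of Lemma \ref{lemma:stiefelvariety}, with no stray twist, and that the $B(G/H)$, flag-complex, and $B(\Z/p)^{n-d}$ factors of the product separate compatibly. This is bookkeeping rather than a genuine obstruction, but it is where a basis choice or a sign could easily go wrong.
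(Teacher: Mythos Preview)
Your proposal is correct and follows exactly the paper's approach: the paper's entire proof is the single sentence ``This is immediate from applying Lemma \ref{lemma:stiefelvariety} with the functor $\mathcal{F}((\Z/p)^n)=B(\Z/p)_+^n$,'' and you have simply spelled out the unwinding of $E_n(H)$ into the form $e_n(V_{d(H)}(\F_p^n)_+\sm B(\Z/p)_+^n)$ and the verification of the functor hypotheses and product compatibility that the paper leaves implicit. Your caution about the bookkeeping in the transverse product case is well placed but, as you say, not a genuine obstruction.
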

\begin{proof}
This is immediate from applying Lemma \ref{lemma:stiefelvariety} with the functor $\mathcal{F}((\Z/p)^n)=B(\Z/p)_+^n$.
\end{proof}

\end{document}